\newcommand{\nc}{\newcommand}
\nc{\one}{\mbox{\bf 1}}
\nc{\invtensor}{\underset{\leftarrow}{\otimes}}
\nc{\const}{\operatorname{const}}
\nc{\ad}{\operatorname{ad}}
\nc{\tr}{\operatorname{tr}}
\nc{\tp}{\operatorname{top}}
\nc{\rank}{\operatorname{rank}}
\nc{\corank}{\operatorname{corank}}
\nc{\codim}{\operatorname{codim}}
\nc{\sdim}{\operatorname{sdim}}
\nc{\mult}{\operatorname{mult}}
\nc{\spn}{\operatorname{span}}
\nc{\Sym}{\operatorname{Sym}}
\nc{\sym}{\operatorname{sym}}
\nc{\id}{\operatorname{id}}
\nc{\Id}{\operatorname{Id}}
\nc{\Ree}{\operatorname{Re}}
\nc{\htt}{\operatorname{ht}}
\nc{\Ker}{\operatorname{Ker}}
\nc{\rker}{\operatorname{rKer}}
\nc{\im}{\operatorname{Im}}
\nc{\osp}{\mathfrak{osp}}
\nc{\sgn}{\operatorname{sgn}}
\nc{\F}{\operatorname{F}}
\nc{\Mod}{\operatorname{Mod}}
\nc{\Mat}{\operatorname{Mat}}
\nc{\Soc}{\operatorname{Soc}}
\nc{\Inj}{\operatorname{Inj}}
\nc{\Hom}{\operatorname{Hom}}
\nc{\End}{\operatorname{End}}
\nc{\supp}{\operatorname{supp}}
\nc{\Card}{\operatorname{Card}}
\nc{\Ann}{\operatorname{Ann}}
\nc{\Ind}{\operatorname{Ind}}
\nc{\Coind}{\operatorname{Coind}}
\nc{\wt}{\operatorname{wt}}
\nc{\ch}{\operatorname{ch}}
\nc{\Stab}{\operatorname{Stab}}
\nc{\Sch}{{\mathcal S}\mbox{\em ch}}
\nc{\Irr}{\operatorname{Irr}}
\nc{\Spec}{\operatorname{Spec}}
\nc{\Prim}{\operatorname{Prim}}
\nc{\Aut}{\operatorname{Aut}}
\nc{\Ext}{\operatorname{Ext}}
\nc{\Fract}{\operatorname{Fract}}
\nc{\gr}{\operatorname{gr}}
\nc{\deff}{\operatorname{def}}
\nc{\HC}{\operatorname{HC}}
\nc{\red}{\operatorname{red}}
\nc{\wdchi}{\widetilde{\chi}}
\nc{\wdH}{\widetilde{H}}
\nc{\wdN}{\widetilde{N}}
\nc{\wdM}{\widetilde{M}}
\nc{\wdO}{\widetilde{O}}
\nc{\wdR}{\widetilde{R}}
\nc{\wdS}{\widetilde{S}}
\nc{\wdV}{\widetilde{V}}
\nc{\wdC}{\widetilde{C}}
\nc{\Obj}{\operatorname{Obj}}
\nc{\Dglie}{\operatorname{{\mathcal D}glie}}
\nc{\Fin}{\operatorname{{\mathcal F}in}}
\nc{\Adm}{\operatorname{\mathcal{A}dm}}
\nc{\Sg}{{\cS(\fg)}}
\nc{\Shg}{{\cS(\fhg)}}
\nc{\Ug}{{\cU(\fg)}}
\nc{\Uhg}{{\cU(\fhg)}}
\nc{\Sh}{{\cS(\fh)}}
\nc{\Uh}{{\cU(\fh)}}
\nc{\Uhh}{{\cU(\fhh)}}
\nc{\Zg}{{{\mathcal{Z}}(\fg)}}
\nc{\Vir}{{\mathcal{V}ir}}
\nc{\NS}{{\mathcal{N}S}}
\nc{\tZg}{{\widetilde{\mathcal Z}({\mathfrak g})}}
\nc{\Zk}{{\mathcal Z}({\mathfrak k})}
\nc{\Up}{{\mathcal U}({\mathfrak p})}
\nc{\Ah}{{\mathcal A}({\mathfrak h})}
\nc{\Ag}{{\mathcal A}({\mathfrak g})}
\nc{\Ap}{{\mathcal A}({\mathfrak p})}
\nc{\Zp}{{\mathcal Z}({\mathfrak p})}
\nc{\cR}{\mathcal R}
\nc{\cS}{\mathcal S}
\nc{\cT}{\mathcal{T}}
\nc{\cY}{\mathcal Y}
\nc{\cA}{\mathcal A}
\nc{\cU}{\mathcal U}
\nc{\cH}{\mathcal H}
\nc{\cM}{\mathcal M}
\nc{\cL}{\mathcal L}
\nc{\cF}{\mathcal F}
\nc{\fg}{\mathfrak g}
\nc{\fo}{\mathfrak o}
\nc{\CO}{\mathcal O}
\nc{\CR}{\mathcal R}
\nc{\Cl}{\mathcal {C}\ell}
\nc{\cW}{\mathcal{W}}
\nc{\bM}{\mathbf{M}}
\nc{\bL}{\mathbf{L}}
\nc{\bN}{\mathbf{N}}
\nc{\zq}{\mathpzc q}
\nc{\fl}{\mathfrak l}
\nc{\fn}{\mathfrak n}
\nc{\fm}{\mathfrak m}
\nc{\fp}{\mathfrak p}
\nc{\fh}{\mathfrak h}
\nc{\ft}{\mathfrak t}
\nc{\fk}{\mathfrak k}
\nc{\fb}{\mathfrak b}
\nc{\fs}{\mathfrak s}
\nc{\fB}{\mathfrak B}
\nc{\vareps}{\varepsilon}
\nc{\varesp}{\varepsilon}
\nc{\veps}{\varepsilon}
\nc{\fsl}{\mathfrak{sl}}
\nc{\fgl}{\mathfrak{gl}}
\nc{\fso}{\mathfrak{so}}
\nc{\fpq}{\mathfrak{pq}}
\nc{\fq}{\mathfrak q}
\nc{\fsq}{\mathfrak{sq}}
\nc{\fpsq}{\mathfrak{psq}}
\nc{\fhg}{\hat{\fg}}
\nc{\fhn}{\hat{\fn}}
\nc{\fhh}{\hat{\fh}}
\nc{\fhb}{\hat{\fb}}
\nc{\hrho}{\hat{\rho}}
\nc{\hsl}{\hat{\fsl}}
\nc{\fpo}{\mathfrak{po}}
\nc{\dirlim}{\underset{\rightarrow}{\lim}\,}
\nc{\nen}{\newenvironment}
\nc{\ol}{\overline}
\nc{\ul}{\underline}
\nc{\ra}{\rightarrow}
\nc{\lra}{\longrightarrow}
\nc{\Lra}{\Longrightarrow}
\nc{\Lla}{\Longleftarrow}
\nc{\Llra}{\Longleftrightarrow}
\nc{\thla}{\twoheadleftarrow}
\nc{\hra}{\hookrightarrow}
\nc{\iso}{\overset{\sim}{\lra}}
\nc{\ssubset}{\underset{\not=}{\subset}}
\nc{\vac}{|0\rangle}
\nc{\Thm}[1]{Theorem~\ref{#1}}
\nc{\Prop}[1]{Proposition~\ref{#1}}
\nc{\Lem}[1]{Lemma~\ref{#1}}
\nc{\Cor}[1]{Corollary~\ref{#1}}
\nc{\Conj}[1]{Conjecture~\ref{#1}}
\nc{\Claim}[1]{Claim~\ref{#1}}
\nc{\Defn}[1]{Definition~\ref{#1}}
\nc{\Exa}[1]{Example~\ref{#1}}
\nc{\Rem}[1]{Remark~\ref{#1}}
\nc{\Note}[1]{Note~\ref{#1}}
\nc{\Quest}[1]{Question~\ref{#1}}
\nc{\Hyp}[1]{Hypoth\`ese~\ref{#1}}
\begin{document}

\setcounter{section}{-1}

\title[Affine  denominator identity]{Weyl denominator identity
for affine Lie superalgebras with non-zero dual Coxeter number}
\author[Maria Gorelik]{Maria Gorelik}

\address{Dept. of Mathematics, The Weizmann Institute of Science,
Rehovot 76100, Israel}
\email{maria.gorelik@weizmann.ac.il}
\thanks{Supported in part by ISF Grant No. 1142/07}

\begin{abstract}
Weyl denominator identity for the affinization of a basic 
Lie superalgebra with non-zero Killing form
was formulated by V.~Kac and M.~Wakimoto and was proven
by them for the defect one case. In this paper we prove this identity.
\end{abstract}

\maketitle
\section{Introduction}\label{intro}
Let $\fg$ be a basic Lie superalgebra with a non-zero Killing form. 
Let $\fhg$ be the affinization of $\fg$. Let $\fh$ (resp., $\fhh$) be the
Cartan subalgebra in $\fg$ (resp., in $\fhg$) and
let $(-,-)$ be the bilinear form
on $\fhh^*$ which is induced by the Killing form on $\fg$.
Let $\Delta$ (resp., $\hat{\Delta}$) be the root system
of $\fg$ (resp., of $\fhg$). We set 
$$\Delta^{\#}:=\{\alpha\in \Delta_{\ol{0}}|\ (\alpha,\alpha)>0\}.$$
Then $\Delta^{\#}$ is a root system of a simple Lie algebra. 
Let $\hat{\Delta}^{\#}$ be the affinization
of $\Delta^{\#}$.
Denote by $\hat{W}^{\#}$ (resp., ${W}^{\#}$)
the subgroup of $GL(\fhh)$
generated by the reflections $s_{\alpha}: \alpha\in\hat{\Delta}_{\ol{0}},\ 
(\alpha,\alpha)>0$ (resp., $s_{\alpha}: \alpha\in {\Delta}^{\#}$). 
Then ${W}^{\#}$ is the Weyl group of $\Delta^{\#}$ and
$\hat{W}^{\#}$ is the corresponding affine Weyl group.
Recall that $\hat{W}^{\#}=W^{\#}\ltimes T$, where 
$T\subset \hat{W}^{\#}$ is the translation group, 
see~\cite{Kbook}, Chapter 6.
Let $\Pi$ be a set of simple roots for $\fg$, and
let $\hat{\Pi}=\Pi\cup\{\alpha_0\}$ be the corresponding set of simple roots
for $\fhg$.  Let $\Delta_+$,
$\hat{\Delta}_+$ be the corresponding sets of positive roots. We set
$$R:=\frac{\prod_{\alpha\in\Delta_{+,0}}(1-e^{-\alpha})}
{\prod_{\alpha\in\Delta_{+,1}}(1+e^{-\alpha})},\ \ \ 
\hat{R}:=\frac{\prod_{\alpha\in\hat{\Delta}_{+,0}}(1-e^{-\alpha})}
{\prod_{\alpha\in\hat{\Delta}_{+,1}}(1+e^{-\alpha})}.$$
Following~\cite{KW}, we call $R$ the {\em Weyl denominator}
and $\hat{R}$ the {\em affine Weyl denominator}.
The Weyl denominator identity conjectured by V.~Kac and M.~Wakimoto
in~\cite{KW} can be written as
$$\hat{R}e^{\hat{\rho}}=\sum_{w\in T} w(Re^{\hat{\rho}}),$$
where $\hat{\rho}\in\fhh^*$ is such that 
$2(\hat{\rho},\alpha)=(\alpha,\alpha)$ for each $\alpha\in\Pi$.
The original form of this identity is given in formula~(\ref{denom}).
In this paper we prove this identity.

In the paper~\cite{G} we proved the analog of Weyl denominator
identity for finite-dimensional Lie superalgebras (also formulated
and partially proven by Kac-Wakimoto). The proof of the present result 
makes use of this version of Weyl denominator identity.

{\em Acknowledgments.} 
I am very grateful to V.~Kac and A.~Joseph for useful comments.

\section{Kac-Moody superalgebras}\label{sect1}
The notions of a Kac-Moody superalgebras and its Weyl group were introduced
in~\cite{S}. We recall some definitions below and then prove Lemmas
~\ref{lemrhowrho},\ref{Rrho}. In the sequel,
we will apply these lemma to the case of affine Lie superalgebras;
in this case  the lemmas can be also
verified using the explicit description of root systems.

\subsection{Construction of Kac-Moody superalgebras}
Let $A=(a_{ij})$ be an $n\times n$-matrix over $\mathbb{C}$
 and let $\tau$ be a subset of $I:=\{1,\ldots,n\}$. 
Let $\fg=\fg(A,\tau)=\fn_-\oplus\fh\oplus\fn_+$ 
be the associated  Lie superalgebra constructed as in~\cite{K77},\cite{Kbook}.
Recall that, in order to construct $\fg(A,\tau)$, one considers a
realization of $A$, i.e. a triple $(\fh,\Pi,\Pi^{\vee})$, where $\fh$ 
is a vector space of dimension $n+corank A$, $\Pi\subset\fh^*$
(resp. $\Pi^{\vee}\subset\fh$) is a linearly independent set of vectors
$\{\alpha_i\}_{i\in I}$ (resp. $\{\alpha_i^{\vee}\}_{i\in I}$),
such that $\langle \alpha_i,\alpha_j^{\vee}\rangle=a_{ji}$,
and constructs a Lie superalgebra $\tilde{\fg}(A,\tau)$ 
on generators $e_i,f_i,\fh$, subject to relations:
$$\begin{array}{l}
[\fh,\fh]=0,\ \ [h,e_i]=\langle \alpha_i,h\rangle e_i,\ \
[h,f_i]=-\langle \alpha_i,h\rangle f_i,\ \text{ for } i\in I,h\in\fh,\ \ 
[e_i,f_j]=\delta_{ij}\alpha_i^{\vee},\\
\ p(e_i)=p(f_i)=\ol{1} \text{ if } i\in\tau,\ \
p(e_i)=p(f_i)=\ol{0} \text{ if } i\not\in\tau,\ \ \ p(\fh)=\ol{0}.
\end{array}$$
Then $\fg(A,\tau)=\tilde{\fg}(A,\tau)/J=\fn_-\oplus\fh\oplus\fn_+$, 
where $J$ is the maximal ideal of
$\tilde{\fg}(A,\tau)$, intersecting $\fh$ trivially, and
$\fn_+$ (resp. $\fn_-$) is the subalgebra generated by the images of the
$e_i$'s (resp. $f_i$'s). 
We obtain the triangular decomposition $\fg(A)=\fn_-\oplus\fh\oplus\fn$. 

Let $\Delta$ be the set of roots of $\fg(A)$, i.e. 
$\Delta=\{\alpha\in\fh^*| \alpha\not=0\ \&\ \fg_{\alpha}\not=0\},\ 
\Delta_+=\{\alpha\in\fh^*| \fn_{\alpha}\not=0\},\ 
\Delta_-=\{\alpha\in\fh^*| \fn_{-,\alpha}\not=0\}$.
One has $\Delta=\Delta_+\coprod \Delta_-$, $\Delta_-=-\Delta_+$.

We say that a simple root $\alpha_i$ is {\em even} (resp., {\em odd}) 
if $i\not\in\tau$ (resp., $i\in\tau$) and 
that $\alpha_i$ is {\em isotropic} if $a_{ii}=0$. 
One readily sees that if $i\in\tau$ (i.e., $e_i,f_i$ are odd), 
then $[e_i,e_i],[f_i,f_i]\in J$ iff $a_{ii}=0$. Therefore for
a simple root $\alpha$ one has
$2\alpha\in\Delta$ iff $\alpha$ is a non-isotropic and odd.

Note that, multiplying the $i$-th row of the matrix $A$ by a non-zero number
corresponds to multiplying $e_i$ and $\alpha^{\vee}_i$ by this number,
thus giving an isomorphic Lie superalgebra. Hence we may assume 
from now on that $a_{ii}=2$ or $0$ for all $i\in I$.

\subsubsection{}\label{condA}
We consider the case when the Cartan matrix $A=(a_{ij})$ is such that

(1) $a_{ii}\in\{0,2\}$ for all $i\in I$ and $a_{ij}=0$ forces $a_{ji}=0$;

(2) if $i\not\in\tau$, then  
$a_{ii}=2$ and $a_{ij}\in\mathbb{Z}_{\leq 0}$ for $j\not=i$;

(3) if $i\in \tau$ and $a_{ii}=2$, then
$a_{ij}\in 2\mathbb{Z}_{\leq 0}$ for $j\not=i$.

In this case $\ad e_i,\ad f_i$ act locally nilpotently for each $i\in I$.

\subsection{Weyl group}\label{odd}
Recall a notion of odd reflections, see~\cite{S}. Let $\Pi$ be a set of simple
roots and $\Delta_+$ be the corresponding set of positive roots. 
Fix a simple regular isotropic root $\beta\in\Pi$ and set
$s_{\beta}(\Pi):=\{s_{\beta}(\alpha)|\ \alpha\in \Pi\}$, where
$$ \text{ for }\alpha\in\Pi\ \ \ \ \begin{array}{lll}
s_{\beta}(\alpha)=-\alpha, & s_{\beta}(\alpha^{\vee})=\alpha^{\vee} &
\text{ if } \alpha=\beta,\\
s_{\beta}(\alpha)=\alpha, & s_{\beta}(\alpha^{\vee})=\alpha^{\vee} &
\text{ if } a_{\alpha \beta}=0,\alpha\not=\beta,\\
s_{\beta}(\alpha)=\alpha+\beta, & s_{\beta}(\alpha^{\vee})=
a_{\alpha \beta}\beta^{\vee}+a_{\beta \alpha}\alpha^{\vee} & 
a_{\alpha\beta}\not=0, a_{\alpha\alpha}+2a_{\alpha\beta}=0,\\
s_{\beta}(\alpha)=\alpha+\beta, & s_{\beta}(\alpha^{\vee})=
2\frac{a_{\alpha \beta}\beta^{\vee}+a_{\beta \alpha}\alpha^{\vee}}
{a_{\beta\alpha}(a_{\alpha\alpha}+2a_{\alpha\beta})}\ & 
a_{\alpha\beta}, a_{\alpha\alpha}+2a_{\alpha\beta}\not=0.
\end{array}
$$
One has $\langle \alpha,\alpha^{\vee}\rangle\in\{0,2\}$ for
each $\alpha\in s_{\beta}(\Pi)$.

By~\cite{S}, Sect. 3, $s_{\beta}(\Pi)$ is a set of simple roots 
for $\Delta$ and the corresponding set of positive roots is
$s_{\beta}(\Delta_+):=\Delta_+\setminus\{\beta\}\cup\{-\beta\}$.
The Cartan matrix corresponding to $s_{\beta}(\Pi)$ is
$(\langle\alpha^{\vee},\alpha'\rangle)_{\alpha,\alpha'\in s_{\beta}(\Pi)}$.

\subsubsection{}\label{condA1}
We assume that $\fg(A)$ is such that
{\em for any chain of odd reflections, the corresponding 
Cartan matrix satisfies the conditions (1)-(3) of~\ref{condA}}.
By~\cite{S} Section 6, the finite-dimensional Kac-Moody superalgebras
and their affinizations satisfy this assumption; other examples and 
classification are given in~\cite{S},\cite{HS}.

\subsubsection{}\label{Theta}
Let $\Theta$ be the collection of all possible
sets of simple roots obtained from $\Pi$
by finite sequences of odd reflections. 

\subsubsection{}
\begin{defn}{} 
An even root $\alpha\in\Delta$ is called {\em principal} if 
$\alpha\in\Pi'$ or $\frac{1}{2}\alpha\in\Pi'$ for some 
$\Pi'\in \Theta$. 
\end{defn}

\subsubsection{}\label{pral}
For each principal root $\alpha$ we fix
$\alpha^{\vee}$ as follows: we choose $\Pi'\in \Theta$ such that
$\alpha\in\Pi'$ or $\frac{1}{2}\alpha\in\Pi'$; in first case,
we take $\alpha^{\vee}\in(\Pi')^{\vee}$ and in the second case
we take $\alpha^{\vee}:=(\frac{1}{2}\alpha)^{\vee}/2$, where
$(\frac{1}{2}\alpha)^{\vee}\in(\Pi')^{\vee}$.
Thanks to the assumption~\ref{condA1}, $\langle\beta,\alpha^{\vee}\rangle
\in\mathbb{Z}$ for each $\beta\in\Pi'$. Thus
for each principal root $\alpha$ one has $\langle\Delta,\alpha^{\vee}\rangle
\subset\mathbb{Z}$.

The matrix $A$  is called {\em symmetrizable}
if for some invertible diagonal matrix $D$ the product
 $DA$ is a symmetric matrix. If $A$ is symmetrizable,
then $\fg(A)$ admits a non-degenerate invariant bilinear
form and the restriction of this form induces a non-degenerate 
bilinear form $(-,-)$ on $\fh^*$. In this case, for each principal root 
$\alpha$ the coroot $\alpha^{\vee}$ is given by  the formula
$\langle\mu,\alpha^{\vee}\rangle=\frac{2(\mu,\alpha)}{(\alpha,\alpha)}$
for any $\mu\in\fh^*$.

\subsubsection{}
Take $\Pi'\in \Theta$. Recall that if $\alpha\in\Pi'$ is odd and is
such that $\langle\alpha,\alpha^{\vee}\rangle\not=0$, then 
$2\alpha$ is a root. Thus $\alpha\in\Pi'$ is principal
iff $\langle\alpha,\alpha^{\vee}\rangle\not=0$.

Since the odd reflections do not change the set of even
positive roots, all principal roots are positive.

For a principal root $\alpha$ let $s_{\alpha}\in GL(\fh^*)$
be the reflection $\mu\mapsto \mu-\langle \mu,\alpha^{\vee}\rangle\alpha$.
If $\alpha\in\Pi$, then $s_{\alpha}(\Delta_+(\Pi)\setminus\{\alpha\})=
\Delta_+(\Pi)\setminus\{\alpha\}$. If $\alpha/2\in\Pi$, 
then $s_{\alpha}(\Delta_+(\Pi)\setminus\{\alpha,\alpha/2\})=
\Delta_+(\Pi)\setminus\{\alpha,\alpha/2\}$.

\subsubsection{}
\begin{defn}{} 
The {\em Weyl group} $W$ is the subgroup of $GL(\fh^*)$ 
generated by the reflections $s_{\alpha}$
with respect to the principal roots.
Clearly, $\det s_{\alpha}=-1$ so $\det w=\pm 1$ for each $w\in W$.
Denote by $\sgn: W\to\{\pm 1\}$ the group homomorphism 
$\sgn(w):=\det w$. 
\end{defn}

One has $W\Delta=\Delta$.

\subsubsection{}
\begin{rem}{}
Let $\fg$ be a finite-dimensional  Kac-Moody superalgebra. By~\cite{S},
$\fg$ satisfies the assumption~\ref{condA1}.
Let $\Delta$ be the root system of $\fg$. In this case
$\fg_0$ is a reductive Lie algebra so $\Delta_{\ol{0}}$
is a root system of finite type. The set of principal roots in $\Delta$ 
is a set of simple roots in $\Delta_{\ol{0}}$
(corresponding to the set of positive roots $\Delta_{\ol{0}}\cap\Delta_+$).
In particular, the Weyl group of $\fg$ coincides with the Weyl group of 
$\Delta_{\ol{0}}$.

Consider the case when $\fg\not=\fgl(n,n)$. The affinization 
$\fhg$ of $\fg$ is a  Kac-Moody superalgebra, 
satisfying the assumption~\ref{condA1} (see~\cite{S}). 
Let $\hat{\Delta}$ be the 
root system of $\fhg$.  In this case $\hat{\Delta}_{\ol{0}}$
is a disjoint union of affine root systems (which are the affinizations
of irreducible components of $\Delta_{\ol{0}}$) and the set of principal roots 
in $\hat{\Delta}$ is a set of simple roots in $\hat{\Delta}_{\ol{0}}$
(corresponding to the set of positive roots 
$\hat{\Delta}_{\ol{0}}\cap\hat{\Delta}_+$).
In particular, the Weyl group of $\fhg$ coincides with the Weyl group of 
$\hat{\Delta}_{\ol{0}}$, so it is the direct product of affine Weyl groups.
\end{rem}

\subsubsection{}
In the sequel we will use the following lemma.

\begin{lem}{Rw}
For any $w\in W$ the set $R(w):=\Delta_+\cap w^{-1}\Delta_-$
is finite.
\end{lem}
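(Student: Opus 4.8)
The plan is to reduce the statement to the classical fact that reflections in a Weyl group of a Kac--Moody algebra have finite inversion sets, by expressing each $w \in W$ as a product of reflections with respect to principal roots and then controlling how $R(w)$ behaves under such products. First I would recall that $W$ is generated by the reflections $s_\alpha$ for $\alpha$ principal, and that each principal root $\alpha$ lies in (or is twice an element of) some $\Pi' \in \Theta$, so that $s_\alpha$ permutes $\Delta_+(\Pi) \setminus \{\alpha, \alpha/2\}$ up to finitely many roots --- indeed, from the facts recalled just before the Weyl group definition, if $\alpha \in \Pi'$ then $s_\alpha(\Delta_+(\Pi') \setminus \{\alpha\}) = \Delta_+(\Pi') \setminus \{\alpha\}$, and $\Delta_+(\Pi')$ differs from $\Delta_+(\Pi) = \Delta_+$ only in the finitely many roots flipped by the chain of odd reflections connecting $\Pi$ to $\Pi'$ (here Lemma applies that odd reflections change only a single root each, and $\Theta$-chains are finite). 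Hence for a single reflection $s_\alpha$, $R(s_\alpha) = \Delta_+ \cap s_\alpha \Delta_-$ is finite.

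Next I would argue by induction on a length function. Write $w = s_{\beta_1} \cdots s_{\beta_k}$ with each $\beta_j$ principal, and use the standard cocycle-type decomposition
\[
R(w) = \Delta_+ \cap w^{-1}\Delta_- = \bigcup_{j=1}^{k} s_{\beta_k} \cdots s_{\beta_{j+1}} \bigl( R(s_{\beta_j}) \bigr) \cap \Delta_+ ,
\]
or more carefully the disjoint-union formula $R(w_1 w_2) \subseteq R(w_2) \cup w_2^{-1} R(w_1)$. Since each $R(s_{\beta_j})$ is finite and $W$ preserves $\Delta$ (the statement $W\Delta = \Delta$ recalled above), each term $w_2^{-1} R(s_{\beta_j})$ is a finite subset of $\Delta$, so the whole union is finite; intersecting with $\Delta_+$ keeps it finite. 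This gives finiteness of $R(w)$ for every $w \in W$.

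The main obstacle I anticipate is the bookkeeping in the single-reflection case: one must check that $R(s_\alpha)$ is finite \emph{uniformly} with respect to the fixed base $\Pi$, not merely with respect to the base $\Pi'$ in which $\alpha$ is simple. This requires knowing that $\Delta_+(\Pi) \,\triangle\, \Delta_+(\Pi')$ is finite for every $\Pi' \in \Theta$, which in turn rests on the fact that a single odd reflection changes $\Delta_+$ by exactly one root ($\beta \mapsto -\beta$) as recalled in Section~\ref{odd}, together with finiteness of the chains defining $\Theta$. A clean way to package this is: $R(s_\alpha) \subseteq \bigl(\Delta_+(\Pi) \triangle \Delta_+(\Pi')\bigr) \cup \{\alpha, \alpha/2\} \cup s_\alpha\bigl(\Delta_+(\Pi)\triangle\Delta_+(\Pi')\bigr)$, all three pieces finite. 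Once this is in place the inductive step is purely formal, using only $W\Delta = \Delta$ and the subadditivity of $w \mapsto R(w)$, so I expect the proof to be short.
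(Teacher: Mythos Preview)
Your proposal is correct and follows essentially the same route as the paper's proof: first bound $R(s_\alpha)$ for a single principal reflection by passing to a base $\Pi'\in\Theta$ in which $\alpha$ (or $\alpha/2$) is simple and using that $\Delta_+\setminus\Delta_+(\Pi')$ is finite, then propagate finiteness to arbitrary $w$ via the subadditivity $R(w_1w_2)\subseteq R(w_2)\cup w_2^{-1}R(w_1)$. The paper writes the induction as $w=s_\alpha y$ and obtains $R(w)\subset R(y)\cup y^{-1}R(s_\alpha)$, which is exactly your cocycle inclusion; your single-reflection bound is phrased a bit more generously (including $s_\alpha$ applied to the symmetric difference), but the argument and its ingredients are the same.
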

\begin{proof}
Let  $\alpha$ be a principal root, i.e.
$\alpha\in\Pi'$ or $\frac{1}{2}\alpha\in\Pi'$
for some $\Pi'\in\Theta$; let
$\Delta'_+$ be the corresponding set of the positive roots.
By above, $\Delta'_+\cap s_{\alpha}\Delta_-'\subset
\{\alpha,\frac{1}{2}\alpha\}$.
Therefore $\Delta_+\cap s_{\alpha}(\Delta_-)\subset \{\alpha,
\frac{1}{2}\alpha\}\cup
\bigl(\Delta_+\setminus\Delta'_+\bigr)$. Since $\Pi'\in\Theta$, the set
$\Delta_+\setminus\Delta'_+$ is  finite. Hence 
$R(s_{\alpha})$ is  finite as well.

Now let $w=s_{\alpha}y$, where $y\in W$ is such that $R(y)$ is finite.
One has 
$$R(w)\subset R(y)\cup 
\{\gamma\in\Delta_+|\ y\gamma\in\Delta_+\cap s_{\alpha}(\Delta_-)\}
\subset R(y)\cup y^{-1} R(s_{\alpha}),$$
so $R(w)$ is finite. The claim follows.
\end{proof}

\subsection{}\label{Q+}
Set
$$Q^+=\sum_{\alpha\in \Pi}\mathbb{Z}_{\geq 0}\alpha,\ \ \ 
P:=\{\lambda\in\fh^*|\ \langle\lambda,\alpha^{\vee}\rangle\in\mathbb{Z}
\text{ for all $\alpha\in\Pi$ s.t. } 
\langle\alpha,\alpha^{\vee}\rangle\not=0\}.$$
Clearly, $P$ is an additive subgroup of $\fh^*$.
The conditions on the Cartan matrix in~\ref{condA} 
ensure that $\Delta\subset P$; in particular, $P-Q^+\subset P$.
Introduce the standard partial order on $P$ by
$\mu\leq\nu$ if $(\nu-\mu)\in Q^+$.
Introduce the height function $\htt:Q^+\to\mathbb{Z}_{\geq 0}$ by 
$\ \htt (\sum_{\alpha\in {\Pi}}m_{\alpha}\alpha):=
\sum_{\alpha\in{\Pi}}m_{\alpha}$.

\subsubsection{}\label{rho}
Choose $\rho$ such that
$\langle\rho,\alpha^{\vee}\rangle=\frac{1}{2}
\langle\alpha,\alpha^{\vee}\rangle$
for each $\alpha\in\Pi$. For each $\Pi'\in\Theta$ 
set $\rho_{\Pi'}:=\rho+\sum_{\beta\in \Delta_+(\Pi)\setminus\Delta_+(\Pi')}
\beta$. One readily sees  that $\langle\rho_{\Pi'},\alpha^{\vee}\rangle=
\frac{1}{2}\langle\alpha,\alpha^{\vee}\rangle$
for each $\alpha\in\Pi'$. The assumption~\ref{condA1} ensures
that  $\langle\beta,\alpha^{\vee}\rangle\in\mathbb{Z}$ for each 
$\alpha\in\Pi'$ such that $\langle\alpha,\alpha^{\vee}\rangle\not=0$. 
We conclude that $\langle\rho,\alpha^{\vee}\rangle\in\mathbb{Z}$ for each
$\alpha\in\Pi'$ such that $\langle\alpha,\alpha^{\vee}\rangle\not=0$ 
and each $\Pi'\in\Theta$. In particular, 
$W\rho\subset (\rho+\sum_{\alpha\in {\Delta}}\mathbb{Z}\alpha)$.

\subsubsection{}
\begin{lem}{lemrhowrho}
Let $\Pi_+$ be the set of principal roots satisfying
$\langle\rho,\alpha^{\vee}\rangle\geq 0$ and
let $W_+$ be the subgroup of $W$ generated by the reflections
$\{s_{\alpha},\alpha\in\Pi_+\}$. 

(i) One has $\rho-w\rho\in Q^+$ for any $w\in W_+$.

(ii) If $w=s_{\alpha_{i_1}}\ldots s_{\alpha_{i_r}}$ is a reduced decomposition
of $w\in {W}_+$, then
$$\htt(\rho-w\rho)\geq |\{j: \langle\rho,\alpha_{i_j}\rangle\not=0\}|.$$

(iii) The stabilizer of $\rho$ in $W_+$ is generated by the reflections
$\{s_{\alpha}|\ \alpha\in\Pi_+\ \&\ \langle\rho,\alpha\rangle=0\}$. 
\end{lem}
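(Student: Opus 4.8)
The plan is to treat the three parts in the order (i), (ii), (iii), since (ii) strengthens (i) and (iii) will follow from (ii) by a minimality argument. The main tool throughout is the standard ``exchange/length'' bookkeeping with the reflections $s_\alpha$ for $\alpha\in\Pi_+$, combined with the observation in~\ref{rho} that $s_\alpha\rho=\rho-\langle\rho,\alpha^\vee\rangle\alpha$ with $\langle\rho,\alpha^\vee\rangle\geq 0$ for $\alpha\in\Pi_+$, so that each such reflection moves $\rho$ down (or fixes it) in the partial order $\leq$ on $P$.

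For (i) and (ii) I would argue by induction on the length $r$ of a reduced decomposition $w=s_{\alpha_{i_1}}\cdots s_{\alpha_{i_r}}$ in $W_+$. Write $w=s_{\alpha}w'$ with $\alpha=\alpha_{i_1}$ principal, $\langle\rho,\alpha^\vee\rangle\geq 0$, and $w'$ of length $r-1$. Then
$$\rho-w\rho=(\rho-s_\alpha\rho)+s_\alpha(\rho-w'\rho)=\langle\rho,\alpha^\vee\rangle\,\alpha+s_\alpha(\rho-w'\rho).$$
By the induction hypothesis $\rho-w'\rho\in Q^+$, and since $\alpha$ is a \emph{simple} root in \emph{some} $\Pi'\in\Theta$ but not necessarily in $\Pi$, I cannot directly say $s_\alpha$ preserves $Q^+$; instead I use that, for a reduced decomposition, $w'^{-1}\alpha\in\Delta_+$ (this is the key exchange fact, valid because $\langle\rho,\alpha^\vee\rangle>0$ forces $s_\alpha w'$ longer than $w'$ in the reflection-length sense — or more robustly, because \Lem{Rw} guarantees $R(w)$ is finite, and one runs the usual argument that $\ell(s_\alpha w')>\ell(w')$ iff $w'^{-1}\alpha>0$). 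Then $\rho-w\rho=\langle\rho,\alpha^\vee\rangle\alpha + (\rho-w'\rho) - \langle\rho-w'\rho,\alpha^\vee\rangle\alpha$; one checks $\langle\rho-w'\rho,\alpha^\vee\rangle=\langle\rho,\alpha^\vee\rangle - \langle w'\rho,\alpha^\vee\rangle = \langle\rho,\alpha^\vee\rangle - \langle\rho,w'^{-1}\alpha^\vee\rangle$, and since $w'^{-1}\alpha\in\Delta_+$ while $\rho$ satisfies $\langle\rho,\gamma^\vee\rangle\geq 0$ is \emph{not} automatic — here I instead track heights directly: $\rho-w\rho=(\rho-w'\rho)+w'(\rho-s_{w'^{-1}\alpha}\rho)\cdot(\pm)$. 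The cleanest route, which I would take, is: $\rho - w\rho = \sum_{k=1}^r c_k\,\beta_k$ where $\beta_k := s_{\alpha_{i_1}}\cdots s_{\alpha_{i_{k-1}}}(\alpha_{i_k})$ and $c_k := \langle\rho,\alpha_{i_k}^\vee\rangle\geq 0$; by the subword characterization of reduced words each $\beta_k\in\Delta_+$, hence $\beta_k\in Q^+$, giving (i) at once, and $\htt(\rho-w\rho)=\sum_k c_k\,\htt(\beta_k)\geq\sum_k c_k\cdot 1\geq |\{k: c_k\neq 0\}| = |\{j:\langle\rho,\alpha_{i_j}\rangle\neq 0\}|$, giving (ii) (using $\langle\rho,\alpha_{i_j}^\vee\rangle\neq 0 \Leftrightarrow \langle\rho,\alpha_{i_j}\rangle\neq 0$, as $\alpha_{i_j}^\vee$ is a positive multiple of... — one must be slightly careful with isotropic normalizations, but the principal roots here are the relevant even ones).

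For (iii), let $W_+^0=\Stab_{W_+}(\rho)$ and let $H$ be the subgroup generated by $\{s_\alpha : \alpha\in\Pi_+,\ \langle\rho,\alpha\rangle=0\}$; clearly $H\subseteq W_+^0$. For the reverse inclusion take $w\in W_+^0$ and a reduced decomposition $w=s_{\alpha_{i_1}}\cdots s_{\alpha_{i_r}}$; by (ii), $0=\htt(\rho-w\rho)\geq |\{j:\langle\rho,\alpha_{i_j}\rangle\neq 0\}|$, so \emph{every} $\alpha_{i_j}$ in the reduced word satisfies $\langle\rho,\alpha_{i_j}\rangle=0$, i.e. $w\in H$. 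Done.

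The main obstacle I anticipate is purely technical: making the ``$\beta_k\in\Delta_+$ for reduced words'' step fully rigorous in the Kac-Moody \emph{super}algebra setting, where $W$ is generated by reflections in principal (even) roots but the ambient root system is non-reduced and the roots $\alpha_{i_k}$ are simple only after passing through a chain of odd reflections. The remedy is to stay entirely inside the even sub-root-system: the principal roots form a genuine (possibly non-reduced) root system with Weyl group $W$, \Lem{Rw} provides finiteness of inversion sets, and the classical deletion/exchange condition then applies verbatim. I would spell this out by reducing to the statement $\ell(s_\alpha w)>\ell(w)\iff w^{-1}\alpha\in\Delta_+\cap(\text{positive span of principal roots})$, which is exactly what \Lem{Rw} and the $s_\alpha$-invariance of $\Delta_+'\setminus\{\alpha,\alpha/2\}$ make available.
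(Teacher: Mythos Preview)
Your argument is essentially the paper's: both telescope $\rho-w\rho$ along a reduced expression and use that, for a reduced word $w=s_{\alpha_{i_1}}\cdots s_{\alpha_{i_r}}$ in $W_+$, each $\beta_k=s_{\alpha_{i_1}}\cdots s_{\alpha_{i_{k-1}}}\alpha_{i_k}$ lies in $Q^+$, together with $\langle\rho,\alpha_{i_k}^\vee\rangle\in\mathbb{Z}_{\geq 0}$ (integrality from~\ref{rho}, nonnegativity by definition of $\Pi_+$); part~(iii) then drops out of~(ii) exactly as you say. The paper handles your ``main obstacle'' more directly than your proposed route through \Lem{Rw}: it invokes \cite{S}, Cor.~4.10, which says outright that $W$ is the Weyl group of a Kac--Moody algebra whose simple roots are the principal roots, so $W_+$ is a genuine Kac--Moody Weyl group and the positivity $\beta_k\in\sum_{\alpha\in\Pi_+}\mathbb{Z}_{\geq 0}\alpha\subset Q^+$ (equivalently, the paper's $w'\alpha\in\Delta_+$) is then the standard Coxeter fact, cited from \cite{Jbook},~A.1.
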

\begin{proof}
By~\cite{S}, Cor. 4.10, $W$ is the Weyl group of
a Kac-Moody algebra, whose set of simple roots coincides with
 the set of principal roots in $\Delta$. 
Therefore $W_+$ is the Weyl group of
a Kac-Moody algebra, whose set of simple roots coincides with $\Pi_+$.
For $w\in W_+$, let $l(w)$ be the length of $w$. 
Write $w=w's_{\alpha}$, where $l(w)>l(w')$ and 
$\alpha\in \Pi_+$. By~\cite{Jbook}, A.1, the inequality $l(w)>l(w')$ 
implies that $w'\alpha$ is a non-negative linear combination
of elements of $\Pi_+$, so $w'\alpha\in\Delta_+$.
One has
$$\rho-w\rho=\rho-w'\rho+\langle\rho,\alpha\rangle w'\alpha.$$

By~\ref{rho}, $\langle\rho,\alpha\rangle\in\mathbb{Z}$. Since
$\alpha\in\Pi_+$, one has $\langle\rho,\alpha\rangle\in\mathbb{Z}_{\geq 0}$ so
$\langle\rho,\alpha\rangle w'\alpha\in Q^+$ and 
$\langle\rho,\alpha\rangle w'\alpha=0$
 iff $\langle\rho,\alpha\rangle=0$. The assertions (i), (ii)
follow by induction on the length of $w$; (iii) follows from (ii).
\end{proof}

\subsection{The algebra $\cR$}\label{cRPi}
Call a {\em $Q^+$-cone} a set of the form $(\lambda-Q^+)$, where 
$\lambda\in\fh^*$.

For a formal sum of the form $Y:=\sum_{\nu\in P} b_{\nu} e^{\nu},\ 
b_{\nu}\in\mathbb{Q}$ define the {\em support} of $Y$ by
$\supp(Y):=\{\nu|\ b_{\nu}\not=0\}$.
Let $\cR$ be a vector space over $\mathbb{Q}$,
spanned by the sums of the form
$\sum_{\nu\in Q^+} b_{\nu} e^{\lambda-\nu}$, where $\lambda\in P,\ 
b_{\nu}\in\mathbb{Q}$. In other words, $\cR$ consists of
the formal sums $Y=\sum_{\nu\in P} b_{\nu}e^{\nu}$ with the support 
lying in a finite union of $Q^+$-cones. Note that for any non-zero $Y\in\cR$
the support of $Y$ has a maximal element (with respect to the order 
introduced in~\ref{Q+}).

Clearly, $\cR$ has a structure of commutative algebra over 
$\mathbb{Q}$. One has 
If $Y\in \cR$ is such that $YY'=1$ for some $Y'\in\cR$,
we write $Y^{-1}:=Y'$.

\subsubsection{Action of the Weyl group}
For $w\in W$ set $w(\sum_{\nu\in P} b_{\nu}e^{\nu}):=
\sum_{\nu\in P} b_{\nu}e^{w\nu}$. One has $wY\in\cR$ iff
$w(\supp Y)$ is a subset of a finite union of $Q^+$-cones.

Let $W'$ be a subgroup of $W$. Let $\cR_{W'}:=\{Y\in\cR|\ wY\in \cR \text{ for
each }w\in W'\}$. Clearly, $\cR_{W'}$ is a subalgebra of $\cR$.

\subsubsection{Infinite products}\label{infprod}
An infinite product of the form $Y=\prod_{\alpha\in X}
(1+a_{\alpha}e^{-\alpha})^{r(\alpha)}$, where $a_{\alpha}\in \mathbb{Q},\
\ r(\alpha)\in\mathbb{Z}_{\geq 0}$ and $X\subset \Delta$ is such that 
the set $X\setminus\Delta_+$ is finite, can be naturally viewed 
as an element of $\cR$; clearly, this element does not depend
on the order of factors. Let $\cY$ be the set of such infinite products.
For any $w\in W$ the infinite product
$$wY:=\prod_{\alpha\in X}(1+a_{\alpha}e^{-w\alpha})^{r(\alpha)},$$
is again an infinite product of the above form, since
 the set $w\Delta_+\setminus \Delta_+=-(w\Delta_-\cap\Delta_+)$ 
is finite by~\Lem{Rw}.
Hence $\cY$ is a $W$-invariant multiplicative subset of $\cR_W$.

It is easy to see that the elements of $\cY$ are invertible in $\cR$: using 
the geometric series we can expand  $Y^{-1}$ 
(for example, for $\alpha\in\Delta_+$ one has $(1-e^{\alpha})^{-1}=
-e^{-\alpha}(1-e^{-\alpha})^{-1}=-\sum_{i=1}^{\infty} e^{-i\alpha}$).

\subsubsection{The subalgebra $\cR'$}
Denote by $\cR'$ the localization of $\cR_W$ by $\cY$. By above,
$\cR'$ is a subalgebra of $\cR$. Observe that $\cR'\not\subset \cR_W$:
for example, $(1-e^{-\alpha})\in\cR'$, but 
$(1-e^{-\alpha})^{-1}=\sum_{j=0}^{\infty} e^{-j\alpha}\not\in \cR_{W}$.
We extend the action of $W$
from $\cR_W$ to $\cR'$ by setting $w(Y^{-1}Y'):=(wY)^{-1}(wY')$.

An infinite product of the form $Y=\prod_{\alpha\in X}
(1+a_{\alpha}e^{-\alpha})^{r(\alpha)}$, where $a_{\alpha}, X$ are as above
and $r(\alpha)\in\mathbb{Z}$ lies in $\cR'$ and
$wY=\prod_{\alpha\in X} (1+a_{\alpha}e^{-w\alpha})^{r(\alpha)}$.
One has
$$\supp(Y)\subset\lambda'-Q^+,\ \text{ where }
\lambda':=-\sum_{\alpha\in X\setminus\Delta_+: a_{\alpha}\not=0} 
r_{\alpha}\alpha.$$

\subsubsection{}\label{compex}
Let $W'$ be a subgroup of $W$. 
For $Y\in\cR'$ we say that {\em $Y$ is $W'$-invariant
(resp., $W'$-skew-invariant)} if $wY=Y$
(resp., $wY=\sgn(w)Y$) for each $w\in W'$.

Let $Y=\sum a_{\mu} e^{\mu}\in\cR_{W'}$ be $W'$-skew-invariant. 
Then $a_{w\mu}=(-1)^{\sgn(w)}a_{\mu}$
for each $\mu$ and $w\in W'$. In particular,  
$W'\supp(Y)=\supp(Y)$, and, moreover, for each  
$\mu\in\supp(Y)$ one has $\Stab_{W'}\mu\subset\{w\in W'|\ \sgn(w)=1\}$.
The condition $Y\in \cR_{W'}$ is essential: for example, for
$W'=\{\id,s_{\alpha}\}$,
the expression $Y:=e^{\alpha}-e^{-\alpha}$ is $W'$-skew-invariant, so
$Y^{-1}=e^{-\alpha}(1-e^{-2\alpha})^{-1}$ is also $W'$-skew-invariant,
but $\supp(Y^{-1})=-\alpha,-3\alpha,\ldots$ is not 
$s_{\alpha}$-invariant.

Take $Y=\sum a_{\mu} e^{\mu}\in\cR_{W'}$ and set
$\sum_{w\in W'}\sgn(w) wY=:\sum b_{\mu} e^{\mu}$.
One has $b_{\mu}=\sum_{w\in W'}\sgn(w) a_{w\mu}$
so $b_{\mu}=\sgn(w)b_{w\mu}$
for each $w\in W'$. We conclude that 
$$Y\in\cR_{W'}\ \&\ 
\sum_{w\in W'}\!\sgn(w) wY\in\cR\ \Longrightarrow\ \left\{\begin{array}{l}
\sum_{w\in W'}\sgn(w) wY\in \cR_{W'};\\  \sum_{w\in W'}\sgn(w) wY
\text{ is $W'$-skew-invariant};\\ 
\supp (\sum_{w\in W'}\sgn(w) wY)\text{ is  $W'$-stable}.
\end{array}\right.$$

\subsection{}\label{RPi}
For each $\Pi'\in\Theta$ (see~\ref{Theta}) introduce  the following
elements of $\cR$:
$$R(\Pi')_0:=\prod_{\alpha\in\Delta_+(\Pi')\cap\Delta_{\ol{0}}} 
(1-e^{-\alpha}),\ \ \ \ 
R(\Pi')_1:=\prod_{\alpha\in\Delta_+(\Pi')\cap\Delta_{\ol{1}}} 
(1+e^{-\alpha}),\ \ R(\Pi'):=\frac{R(\Pi')_0}{R(\Pi')_1}.$$
We set
$$R_0:=R(\Pi)_0,\ \ R_1:=R(\Pi)_1,\ \ R:=R(\Pi).$$
One readily sees from~\ref{rho} that $R(\Pi')e^{\rho_{\Pi'}}=Re^{\rho}$
for any $\Pi'\in\Theta$. 

\subsubsection{}
\begin{lem}{Rrho}
$Re^{\rho}$ is a $W$-skew-invariant element of $\cR'$.
\end{lem}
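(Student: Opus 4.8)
The statement has two parts: that $Re^\rho$ lies in $\cR'$, and that it is $W$-skew-invariant. For membership in $\cR'$, I would first write $Re^\rho = R_0 e^\rho \cdot R_1^{-1}$ and observe that $R_0 e^\rho$ is a genuine element of $\cR$ (its support lies in $\rho - Q^+$ since each factor $(1-e^{-\alpha})$ contributes negatively), while $R_1 = \prod_{\alpha\in\Delta_{+,1}}(1+e^{-\alpha})$ is an infinite product of the type considered in~\ref{infprod}, hence an element of the multiplicative set $\cY\subset\cR_W$, and therefore invertible in $\cR'$. Thus $Re^\rho \in \cR'$. (One should note $R_0 e^\rho$ actually lies in $\cR_W$, being essentially a Weyl-denominator-type object for the even part, but strictly all we need is $R_0 e^\rho\in\cR$ and $R_1\in\cY$.)

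For the skew-invariance, it suffices to check $s_\alpha(Re^\rho) = -Re^\rho$ for each principal root $\alpha$, since the $s_\alpha$ generate $W$ and $\sgn(s_\alpha)=-1$. I will treat the two cases from~\ref{pral}: either $\alpha\in\Pi'$ or $\tfrac12\alpha\in\Pi'$ for some $\Pi'\in\Theta$. Using the key identity $R(\Pi')e^{\rho_{\Pi'}} = Re^\rho$ (established at the end of~\ref{RPi}), it is enough to prove $s_\alpha\bigl(R(\Pi')e^{\rho_{\Pi'}}\bigr) = -R(\Pi')e^{\rho_{\Pi'}}$, i.e. I may assume $\alpha$ (or $\alpha/2$) is simple for the chosen base $\Pi'$. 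In that situation the facts recorded just before the definition of the Weyl group apply: if $\alpha\in\Pi'$ then $s_\alpha$ permutes $\Delta_+(\Pi')\setminus\{\alpha\}$, so it fixes the product $\prod_{\beta\neq\alpha}(1\pm e^{-\beta})^{\pm1}$ appearing in $R(\Pi')$, and acts on the remaining factor $(1-e^{-\alpha})$ and on $e^{\rho_{\Pi'}}$; a direct computation $s_\alpha\bigl((1-e^{-\alpha})e^{\rho_{\Pi'}}\bigr) = (1-e^{\alpha})e^{\rho_{\Pi'}-\alpha} = -(1-e^{-\alpha})e^{\rho_{\Pi'}}$, using $\langle\rho_{\Pi'},\alpha^\vee\rangle = \tfrac12\langle\alpha,\alpha^\vee\rangle = 1$, gives the sign. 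The case $\alpha/2\in\Pi'$ is analogous: $\alpha/2$ is then a non-isotropic odd simple root, $\alpha=2(\alpha/2)$ is even, $s_\alpha=s_{\alpha/2}$ fixes $\Delta_+(\Pi')\setminus\{\alpha,\alpha/2\}$, and one checks directly that $s_\alpha$ applied to the product of the two relevant factors $(1-e^{-\alpha})$ (even, in $R_0$) and $(1+e^{-\alpha/2})^{-1}$ (odd, in $R_1^{-1}$) times $e^{\rho_{\Pi'}}$ produces the factor $-1$; here one uses $\langle\rho_{\Pi'},\alpha^\vee\rangle=1$ together with $s_\alpha(\alpha/2)=-\alpha/2$.

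Throughout I must keep track of the fact that these manipulations are being carried out inside $\cR'$ and that applying $s_\alpha$ is legitimate: $R(\Pi')$ itself need not lie in $\cR_W$ (its denominator is an infinite product whose inverse has support not $W$-stable), but $R(\Pi')e^{\rho_{\Pi'}}$ does, or at least $s_\alpha$ of it makes sense as an element of $\cR'$ by the rule $w(Y^{-1}Y')=(wY)^{-1}(wY')$ from~\ref{infprod}. Concretely I write $Re^\rho = (R_0 e^\rho)\cdot R_1^{-1}$, apply $s_\alpha$ factor-by-factor via this rule, and reassemble.

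**Main obstacle.** The only real subtlety, and the step I expect to require the most care, is the bookkeeping needed to justify that $s_\alpha$ acts on the finite rearrangement of factors as claimed — i.e. that after reducing to $\Pi'$ with $\alpha$ (or $\alpha/2$) simple, the infinite product $\prod_{\beta\in\Delta_+(\Pi')\setminus\{\alpha,\alpha/2\}}(1\pm e^{-\beta})^{\pm1}$ is genuinely $s_\alpha$-invariant as an element of $\cR'$ (not just formally term-by-term), which rests on $s_\alpha$ permuting that index set — a fact cited from the paragraph preceding the Weyl-group definition — and on $\cY$ being $W$-invariant (\ref{infprod}). The explicit sign computations are routine once this is in place.
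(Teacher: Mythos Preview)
Your approach is essentially identical to the paper's: show membership in $\cR'$ via $R_0,R_1\in\cY$, then verify skew-invariance generator-by-generator by passing to a base $\Pi'\in\Theta$ where $\alpha$ or $\alpha/2$ is simple (using $R(\Pi')e^{\rho_{\Pi'}}=Re^\rho$) and invoking that $s_\alpha$ permutes the remaining positive roots; you simply spell out the sign computation that the paper leaves implicit. One small slip: your parenthetical claim that ``strictly all we need is $R_0 e^\rho\in\cR$'' is wrong---since $\cR'$ is the localization of $\cR_W$ (not of $\cR$) by $\cY$, you do need $R_0e^\rho\in\cR_W$, which you correctly assert anyway.
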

\begin{proof}
By~\ref{infprod}, $R_0,R_1\in\cY$ so $Re^{\rho}\in\cR'$.
Let $\alpha$ be a  principal root. If $\alpha\in\Pi'$, then
$s_{\alpha}(\Delta_+'\setminus\{\alpha\})=
\Delta'_+\setminus\{\alpha\}$.
If $\alpha/2\in\Pi'$, then $s_{\alpha}(\Delta_+'\setminus\{\alpha,\alpha/2\})=
\Delta'_+\setminus\{\alpha,\alpha/2\}$. In both cases 
$s_{\alpha}(R(\Pi')e^{\rho_{\Pi'}})=-R(\Pi')e^{\rho_{\Pi'}}$.
By~\ref{RPi}, $R(\Pi')e^{\rho_{\Pi'}}=Re^{\rho}$.
The claim follows.
\end{proof}

\section{Proof of the denominator identity}
We retain notation of Sect.~\ref{intro}.

Fix triangular decomposition of the reductive Lie algebra
$\fg_0$. By~\cite{S}, any two sets of simple roots of $\fg$, 
which are compatible
with a triangular decomposition of the reductive Lie algebra
$\fg_0$,  are connected by a chain of odd reflections.
By~\ref{RPi}, both sides of 
the denominator identity 
$\hat{R}e^{\hat{\rho}}=\sum_{w\in T} w(Re^{\hat{\rho}})$
do not change if we substitute $\Pi$ by $s_{\beta}\Pi$, where $\beta$
is a simple odd root of $\fg$.  
Hence it is enough to prove the denominator identity 
for one choice of $\Pi$; this is done in this section.

\subsection{Another form of denominator identity}
Let us recall the denominator identity for $\fg$ (see~\cite{KW},\cite{G}
for a proof).

Recall that $S\subset \Delta_{\ol{1}}$ is called a 
{\em maximal isotropic set of roots}
if $S$ is a basis of a maximal isotropic subspace in $\fh^*$ with respect to
the form $(-,-)$.
By~\cite{KW}, there exists a maximal isotropic set of roots $S$ and each 
such $S$ is a subset of a set of simple roots (for each $S$ there exists
$\Pi$ such that $S\subset \Pi$).

Fix a maximal isotropic set of roots $S$
and a set of simple roots $\Pi$ such that $S\subset \Pi$.
The denominator identity for $\fg$ takes the following form:
\begin{equation}\label{fincase}
Re^{\rho}=\sum_{w\in W^{\#}}\sgn(w) w\bigl(\frac{e^{\rho}}{\prod_{\beta\in
S}(1+e^{-\beta})} \bigr),
\end{equation}
where $\rho\in\fh^*$ is such that $2(\rho,\alpha)=(\alpha,\alpha)$
for each $\alpha\in\Pi$. Note that
$\hat{\rho}-\rho$ is $W^{\#}$-invariant. Using~(\ref{fincase}) we obtain
$$\begin{array}{ll}
\sum_{y\in T}  y(Re^{\hat{\rho}})&=\sum_{y\in T}  y\bigl(
e^{\hat{\rho}-\rho}Re^{\rho}\bigr)=\sum_{y\in T}  y\bigl(
e^{\hat{\rho}-\rho}\sum_{w\in W^{\#}}
\sgn(w) w\bigl(\frac{e^{\rho}}{\prod_{\beta\in S}(1+e^{-\beta})}
\bigr)\bigr)\\
&=\sum_{y\in T}  y\bigl(\sum_{w\in W^{\#}}
\sgn(w) w\bigl(\frac{e^{\hat{\rho}}}{\prod_{\beta\in S}(1+e^{-\beta})}
\bigr)\bigr)=\sum_{w\in\hat{W}^{\#}} w
\bigl(\frac{e^{\hat{\rho}}}{\prod_{\beta\in S}(1+e^{-\beta})}
\bigr).
\end{array}$$
Hence the denominator identity for $\fhg$ can be rewritten as
\begin{equation}\label{denom}
\hat{R}e^{\hat{\rho}}=\sum_{w\in\hat{W}^{\#}} \sgn(w) w
\bigl(\frac{e^{\hat{\rho}}}{\prod_{\beta\in S}(1+e^{-\beta})}
\bigr).\end{equation}

We set
$$Y:=\sum_{w\in \hat{W}^{\#}}\sgn(w)w(\frac{e^{\hat{\rho}}}{\prod_{\beta\in S}
(1+e^{-\beta})}).$$

\subsection{Notation}\label{condii}
We set
$$\Delta_2=\{\alpha\in\Delta_{\ol{0}}|\ (\alpha,\alpha)<0\}.$$
Then $\Delta_{\ol{0}}=\Delta^{\#}\coprod\Delta_2$, and $\Delta^{\#},\Delta_2$ 
are root systems of semisimple Lie algebras.

Denote by $\delta$ the minimal imaginary root in $\hat{\Delta}$.
Let $\Pi$ be a set of simple roots for $\Delta_+$ and $\theta\in\Delta_+$
be a maximal root.
Recall that $\hat{\Pi}=\Pi\cup\{\delta-\theta\}$ is the set of simple roots
for $\hat{\Delta}_+=\cup_{s=1}^{\infty} \{s\delta+\Delta\}\cup \Delta_+$.

For $\fg\not=B(n,n)$, we fix a set of simple roots 
$\Pi$ for $\Delta$ such that
$$\begin{array}{ll}
(i)&  \Pi\ \text{ contains a maximal isotropic set of roots } S;\\

(ii) & \forall \alpha\in\Pi\ \ (\alpha,\alpha)\geq 0; \\

(iii)  & \theta\in\Delta^{\#},
\end{array}
$$
see~\ref{rootsys} for a choice of $\Pi$. 
For $\fg=B(n,n)$ we choose $\Pi$ as in~\ref{rootsys};
in this case the properties (i) and (ii) hold, but $\theta$ is  isotropic. 
Note that in all cases $(\theta,\theta)\geq 0$.
Combining with (ii), we get $(\hat{\rho},\beta)=(\beta,\beta)/2\geq 0$ 
for all $\beta\in\hat{\Pi}$. Set
$\hat{Q}^+:=\sum_{\alpha\in\Pi} \mathbb{Z}_{\geq 0}\alpha$. 
We obtain
$$(\hat{\rho},\hat{Q}^+)\geq 0,\ \ \ 
\frac{(\hat{\rho},\alpha)}{(\alpha,\alpha)}\geq 0\ \text{ for }
\alpha\in\hat{\Delta}^{\#}.$$

\subsection{Support of $\hat{R}e^{\hat{\rho}}$}
Set
$$U:=\{\mu\in\hat{\rho}-\hat{Q}^+|\ (\mu,\mu)=(\hat{\rho},\hat{\rho})\}.$$
From representation theory we know that the character of the trivial
$\fhg$-module is a linear combination of the characters of 
Verma $\fhg$-modules $M(\lambda)$, where $\lambda\in-\hat{Q}$ and
$(\lambda+\hat{\rho},\lambda+\hat{\rho})=(\hat{\rho},\hat{\rho})$
(since $\fhg$ admits the Casimir element). Therefore
$1=\sum_{\lambda\in U-\hat{\rho}} a_{\lambda}\hat{R}^{-1}e^{\lambda}$
that is
$$\supp (\hat{R}e^{\hat{\rho}})\subset U.$$

\subsection{Support of $Y$}
Expanding the summands of $Y$ we  obtain
$$\supp\bigl(\frac{e^{w\hat{\rho}}}{\prod_{\beta\in S}
(1+e^{-w\beta})}\bigr)\subset \{w\hat{\rho}-\hat{Q}^+\}
\cap\{w\hat{\rho}+\sum_{\beta\in S}\mathbb{Z}w\beta\}.$$
Since $(\hat{\rho},S)=(S,S)=0$ this implies

\begin{equation}\label{expwfrac}
\supp\bigl(\frac{e^{w\hat{\rho}}}{\prod_{\beta\in S}
(1+e^{-w\beta})}\bigr)\subset \{\mu\in w\hat{\rho}-\hat{Q}^+|\
(\mu,\mu)=(\hat{\rho},\hat{\rho})\}.
\end{equation}

\subsubsection{}
\begin{lem}{lem1}
(i) $Y$ is a well-defined element of $\cR$ (see~\ref{cRPi}
for the notation);

(ii) $\supp(Y)\subset U$;

(iii) for $\fg\not=B(n,n)$ the coefficient of $e^{\hat{\rho}}$ in 
$Y$ is equal to $1$.
\end{lem}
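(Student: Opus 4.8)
\textbf{Proof proposal for Lemma~\ref{lem1}.}

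The plan is to control the summands of $Y$ one at a time using the fact that $\hat{W}^{\#}$ acts by isometries and that $\hat\rho$ has nonnegative pairing with $\hat Q^+$. For part (i), I would first recall from~\ref{condii} that $(\hat\rho,\hat Q^+)\geq 0$ and that $\frac{(\hat\rho,\alpha)}{(\alpha,\alpha)}\geq 0$ for $\alpha\in\hat\Delta^{\#}$. Using the description $\hat W^{\#}=W^{\#}\ltimes T$ and arguing by length in this affine Coxeter group (in the spirit of \Lem{lemrhowrho}(i),(ii) applied to the Kac--Moody algebra whose Weyl group is $\hat W^{\#}$), one shows that $\hat\rho-w\hat\rho\in\hat Q^+$ for every $w\in\hat W^{\#}$; indeed this is the standard statement that $\hat\rho$ is dominant for $\hat\Delta^{\#}$ and $\hat W^{\#}$ acts on it within the root lattice from below. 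Combining this with~(\ref{expwfrac}) gives $\supp\bigl(w(e^{\hat\rho}/\prod_{\beta\in S}(1+e^{-w\beta}))\bigr)\subset\hat\rho-\hat Q^+$ for each $w$. Then the key finiteness point: for a fixed weight $\mu\in\hat\rho-\hat Q^+$, only finitely many $w\in\hat W^{\#}$ can contribute a term $e^\mu$, because such a $w$ must satisfy $w\hat\rho\geq\mu$, i.e. $w\hat\rho$ lies in the finite set $(\hat\rho-\hat Q^+)\cap(\mu+\hat Q^+)\cap\{\,\nu:(\nu,\nu)=(\hat\rho,\hat\rho)\,\}$ (finite because a $Q^+$-interval is finite), and $\Stab_{\hat W^{\#}}(\hat\rho)$ is finite by the dominance and~\Lem{lemrhowrho}(iii)-type argument (here $(\hat\rho,\alpha)>0$ for the affine simple roots outside $S$, so the stabilizer is actually trivial or very small). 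Hence the formal sum defining $Y$ has locally finite coefficients and its support lies in a single $Q^+$-cone, so $Y\in\cR$.

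For part (ii), once (i) is established the inclusion is immediate: each summand has support in $\{\mu\in w\hat\rho-\hat Q^+:(\mu,\mu)=(\hat\rho,\hat\rho)\}$ by~(\ref{expwfrac}), and we have just shown $w\hat\rho\in\hat\rho-\hat Q^+$, so $w\hat\rho-\hat Q^+\subset\hat\rho-\hat Q^+$; intersecting with the quadric $(\mu,\mu)=(\hat\rho,\hat\rho)$ and taking the union over $w$ lands inside $U$ by definition of $U$.

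For part (iii), I would identify which summands can possibly contribute to the coefficient of $e^{\hat\rho}$. A term $e^{\hat\rho}$ can only come from the expansion of $w(e^{\hat\rho}/\prod_\beta(1+e^{-w\beta}))$ if $w\hat\rho\geq\hat\rho$, hence (by (i)) $w\hat\rho=\hat\rho$, i.e. $w\in\Stab_{\hat W^{\#}}(\hat\rho)$. For $\fg\neq B(n,n)$ the conditions (ii), (iii) of~\ref{condii} give $(\hat\rho,\alpha)=(\alpha,\alpha)/2>0$ for every non-isotropic affine simple root, and the isotropic simple roots are exactly those in $S$, which pair to $0$ with $\hat\rho$ but whose reflections are not in $\hat W^{\#}$; so $\Stab_{\hat W^{\#}}(\hat\rho)=\{\id\}$. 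Thus only the $w=\id$ summand contributes, and there the coefficient of $e^{\hat\rho}$ in $e^{\hat\rho}\prod_{\beta\in S}(1+e^{-\beta})^{-1}=e^{\hat\rho}(1-e^{-\beta}+\cdots)$ is $1$. I expect the main obstacle to be part (i): making the "locally finite'' claim rigorous requires the precise statement that $\hat\rho$ behaves like a dominant weight for the affine Weyl group $\hat W^{\#}$ together with the finiteness of $Q^+$-intervals, and one must be careful that $T$ (infinite!) does not produce infinitely many terms at a fixed weight — this is exactly where the isometry property $(w\hat\rho,w\hat\rho)=(\hat\rho,\hat\rho)$ combined with $w\hat\rho\leq\hat\rho$ does the work, since on the quadric the downward cone from $\hat\rho$ meets only finitely many translates of $\hat\rho$.
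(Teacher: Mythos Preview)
Your argument for (ii) is fine, and the overall architecture of (i) (show $w\hat\rho\le\hat\rho$, then bound the number of $w$'s contributing to a fixed $\mu$) is essentially the paper's. The serious gap is in (iii), and the same confusion already appears parenthetically in your treatment of (i).

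You claim $\Stab_{\hat W^{\#}}(\hat\rho)=\{\id\}$ for $\fg\neq B(n,n)$, reasoning from the simple roots of $\hat\Pi$: the non-isotropic ones pair positively with $\hat\rho$ and the isotropic ones ``are exactly those in $S$''. Both statements are wrong in general. First, for $A(m,n)$ with $m>n\geq 1$ and $\Pi$ as in~\ref{rootsys}, the isotropic simple roots $\delta_i-\vareps_{i+1}$ are not in $S$. More importantly, the reflections generating $\hat W^{\#}$ come from the simple roots $\Sigma$ of $\hat\Delta^{\#}_+$, \emph{not} from $\hat\Pi$; these are typically sums of several elements of $\hat\Pi$. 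In the same $A(m,n)$ example, $\vareps_i-\vareps_{i+1}\in\Sigma$ for $1\le i\le n$ equals $(\vareps_i-\delta_i)+(\delta_i-\vareps_{i+1})$, a sum of two isotropic simple roots, so $(\hat\rho,\vareps_i-\vareps_{i+1})=0$ and $s_{\vareps_i-\vareps_{i+1}}\in\Stab_{\hat W^{\#}}(\hat\rho)$. Thus the stabilizer contains a full copy of $S_{n+1}$ and is far from trivial.

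What \emph{is} true for $\fg\neq B(n,n)$ is that the affine simple root $\alpha_0=\delta-\theta$ lies in $\Sigma$ with $(\hat\rho,\alpha_0)>0$ (since $\theta\in\Delta^{\#}$), so $\Sigma_0:=\{\alpha\in\Sigma:(\hat\rho,\alpha)=0\}$ is a \emph{proper} subdiagram of the affine diagram $\Sigma$; this simultaneously gives finiteness of $H_0=\Stab_{\hat W^{\#}}(\hat\rho)$ (needed for (i)) and the inclusion $H_0\subset W^{\#}$. The latter reduces the $e^{\hat\rho}$-coefficient of $Y$ to the $e^{\hat\rho}$-coefficient of $\sum_{w\in W^{\#}}\sgn(w)\,w\bigl(e^{\hat\rho}/\prod_{\beta\in S}(1+e^{-\beta})\bigr)=Re^{\hat\rho}$, where the last equality is the finite denominator identity~(\ref{fincase}). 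Without invoking~(\ref{fincase}) you cannot conclude that the (genuinely many) contributions from $w\in H_0$ sum to $1$; this is the missing idea in your proof of (iii).
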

\begin{proof}
By~\cite{S}, the set of principal roots
of $\hat{\Delta}_+$ is the set of
simple roots of $\hat{\Delta}_{+,0}$. By~\ref{condii},
$\hat{W}^{\#}$ is a subgroup of the group $W_+$ introduced
in~\Lem{lemrhowrho}. By above,
$\supp\bigl(\frac{e^{w\hat{\rho}}}{\prod_{\beta\in S}
(1+e^{-w\beta})}\bigr)\subset w\hat{\rho}-\hat{Q}^+$.
In the light of~\Lem{lemrhowrho} for (i) it is enough to show that 
$H_r:=\{w\in \hat{W}^{\#}|\ \htt(\hat{\rho}-w\hat{\rho})\leq r\}$
is finite for each $r$.

Let $\Sigma$ be the set of simple roots of $\hat{\Delta}^{\#}_+$.
Set $\Sigma_0:=\{\alpha\in \Sigma|\  (\hat{\rho},\alpha)=0\}$.
 By~\Lem{lemrhowrho}, $H_0=\Stab_{\hat{W}^{\#}}\hat{\rho}$ 
is the subgroup of $\hat{W}^{\#}$ 
generated by the reflections $\{s_{\alpha}:\alpha\in \Sigma_0\}$
and any $w\in H_r$ is of the form 
$w_1s_{\beta_1}w_2s_{\beta_2}w_3\ldots s_{\beta_r}w_{r+1}$, 
where $w_j\in H_0$ and $\beta_j\in \Sigma\setminus \Sigma_0$.
This means that the finiteness of $H_0$ implies the finiteness of
$H_r$ for $r\geq 0$ and that $H_0$ is the Weyl group of
the Dynkin diagram corresponding to $\Sigma_0$. Hence
for (i) it is enough to verify that the Dynkin diagram of $\Sigma_0$
is of finite type. This can be shown as follows.
Observe that $\Sigma$ is an indecomposable Dynkin diagram of 
affine type. Since $\Sigma_0\subset \Sigma$, it is enough to verify that 
$\Sigma_0\not=\Sigma$.
Since $(\hat{\rho},\delta)=h^{\vee}\not=0$, 
there exists  $\beta\in\hat{\Pi}$ such that
$(\hat{\rho},\beta)\not=0$ that is $(\beta,\beta)\not=0$.
By~\ref{condii}, $(\alpha,\alpha)\geq 0$ for all $\alpha\in\hat{\Pi}$,
so $(\beta,\beta)>0$. Hence  $\beta$ or $2\beta$ belongs
to $\hat{\Delta}^{\#}$. Therefore $(\hat{\rho},\hat{\Delta}^{\#})\not=0$
so $\Sigma_0\not=\Sigma$. This establishes (i).

Combining~(\ref{expwfrac}) and~\Lem{lemrhowrho} (i), 
we obtain $\supp(Y)\subset U$, thus (ii).

Let us show that the coefficient of $e^{\hat{\rho}}$ in 
$Y$ is $1$ for $\fg\not=B(n,n)$. Indeed, by above, 
$\hat{\rho}\in \supp\bigl(\frac{e^{w\hat{\rho}}}{\prod_{\beta\in S}
(1+e^{-w\beta})}\bigr)$ forces $w\in H_0$. 
By~\ref{condii}, for $\fg\not=B(n,n)$
one has $\theta\in\Delta^{\#}$ so $\alpha_0\in \Sigma\setminus \Sigma_0$
and thus $H_0\subset W^{\#}$.
Therefore the coefficient of $e^{\hat{\rho}}$ in $Y$
 is equal to the coefficient of $e^{\hat{\rho}}$
in the expression
$$\sum_{w\in {W}^{\#}}\sgn(w)w(\frac{e^{\hat{\rho}}}{\prod_{\beta\in S}
(1+e^{-\beta})})=e^{\hat{\rho}-\rho}
\sum_{w\in {W}^{\#}}\sgn(w)w(\frac{e^{{\rho}}}{\prod_{\beta\in S}
(1+e^{-\beta})}).$$
Using the denominator identity (\ref{fincase}) we get
$\sum_{w\in {W}^{\#}}\sgn(w)w(\frac{e^{\hat{\rho}}}{\prod_{\beta\in S}
(1+e^{-\beta})})={R}e^{\hat{\rho}}$.
Clearly, the coefficient of $e^{\hat{\rho}}$ in ${R}e^{\hat{\rho}}$
is equal to $1$. This establishes (iii).
\end{proof}

\subsubsection{}
\begin{lem}{lem2}
For $\fg=B(n,n)$ the coefficient of $e^{\hat{\rho}}$ in $Y$
is equal to $1$.
\end{lem}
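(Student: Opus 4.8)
The plan is to compute the coefficient of $e^{\hrho}$ in $Y$ directly and show it equals $1$; the key point is that $Y$ is assembled from $\hat{W}^{\#}$, and only the finite stabilizer $H_0:=\Stab_{\hat{W}^{\#}}(\hrho)$ can contribute to $e^{\hrho}$. By~(\ref{expwfrac}) the support of $w\bigl(\frac{e^{\hrho}}{\prod_{\beta\in S}(1+e^{-\beta})}\bigr)$ lies in $w\hrho-\hat{Q}^{+}$; since $\hat{W}^{\#}\subset W_+$ (see~\ref{condii}), \Lem{lemrhowrho}(i) gives $\hrho-w\hrho\in\hat{Q}^{+}$, and as $\hat{Q}^{+}$ is a pointed cone this forces $w\hrho=\hrho$ whenever $e^{\hrho}$ lies in that support. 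So only $w\in H_0$ contribute; for such $w$ the summand is $\frac{e^{\hrho}}{\prod_{\beta\in S}(1+e^{-w\beta})}$, whose coefficient at $e^{\hrho}$ is the coefficient at $e^{0}$ of $\prod_{\beta\in S}(1+e^{-w\beta})^{-1}$. As $w$ is invertible and $S$ is linearly independent, $\{w\beta\}_{\beta\in S}$ is linearly independent, so $e^{0}$ is the top term of this product if every $w\beta$ is positive and does not lie in its support otherwise. Hence the coefficient of $e^{\hrho}$ in $Y$ is $\sum_{w\in H_0,\ w(S)\subset\hat{\Delta}_{+}}\sgn(w)$, and the lemma is equivalent to the claim $\{w\in H_0\mid w(S)\subset\hat{\Delta}_{+}\}=\{\id\}$.

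For $\fg=B(n,n)$ I would prove this using the explicit root system of $\osp(2n+1,2n)$ with $\Pi$ as in~\ref{rootsys}. Writing $\varepsilon_1,\dots,\varepsilon_n$ for the orthogonal and $\varepsilon'_1,\dots,\varepsilon'_n$ for the symplectic weights, one has $S=\{\varepsilon_i-\varepsilon'_i\}_{i=1}^{n}$, $\theta=\varepsilon_1+\varepsilon'_1$, and condition~(ii) of~\ref{condii} forces $\Delta^{\#}$ to be the $C_n$-system on $\spn\{\varepsilon'_i\}$, so that $\hat{W}^{\#}=W^{\#}\ltimes T$ is the affine Weyl group of type $C_n$. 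Every root of $\hat{\Delta}^{\#}$ is orthogonal to each $\varepsilon_i$, so $\hat{W}^{\#}$, and in particular $H_0$, fixes every $\varepsilon_i$. Since $(\hrho,\delta)=h^{\vee}\neq0$, a direct computation identifies $H_0$ with the full finite Weyl group $W(C_n)$, realized inside $\hat{W}^{\#}$ with nontrivial translation parts: the element $w\in H_0$ attached to a signed permutation $(\sigma,\eta)$ of $\{\varepsilon'_i\}$ acts by $w(\varepsilon'_i)=\eta_i\varepsilon'_{\sigma(i)}+\tfrac{1-\eta_i}{2}\,\delta$. Then $w(\varepsilon_i-\varepsilon'_i)=\varepsilon_i-\eta_i\varepsilon'_{\sigma(i)}-\tfrac{1-\eta_i}{2}\,\delta$: for $\eta_i=1$ this is $\varepsilon_i-\varepsilon'_{\sigma(i)}$, which is a positive root iff $\sigma(i)\geq i$; for $\eta_i=-1$ this is $-(\delta-\varepsilon_i-\varepsilon'_{\sigma(i)})$, and since $\varepsilon_i+\varepsilon'_{\sigma(i)}$ is a positive root with $\varepsilon_i+\varepsilon'_{\sigma(i)}\leq\theta$, the affine root $\delta-\varepsilon_i-\varepsilon'_{\sigma(i)}$ is positive, so $w(\varepsilon_i-\varepsilon'_i)\in\hat{\Delta}_{-}$. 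Thus $w(S)\subset\hat{\Delta}_{+}$ forces $\eta\equiv 1$ and $\sigma(i)\geq i$ for all $i$, i.e. $\sigma=\id$ and $w=\id$; the signed sum collapses to $\sgn(\id)=1$, which is the assertion.

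I expect the main obstacle to be the explicit description of $H_0$ as a subgroup of the affine Weyl group — writing each of its elements as a product of a lattice translation and a signed permutation of the $\varepsilon'_i$ and extracting the shift formula $w(\varepsilon'_i)=\eta_i\varepsilon'_{\sigma(i)}+\tfrac{1-\eta_i}{2}\,\delta$; once this is available, the positivity bookkeeping for the roots $\varepsilon_i-\varepsilon'_j$ and $\delta-\varepsilon_i-\varepsilon'_j$ is routine. It should also be noted why this case cannot be handled as in \Lem{lem1}(iii): for $B(n,n)$ one has $H_0\not\subset W^{\#}$ — the stabilizer genuinely involves the affine simple root through its translation part — so there is no reduction to the finite denominator identity~(\ref{fincase}).
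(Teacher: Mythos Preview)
Your reduction to the claim $\{w\in\hat{W}^{\#}\mid w\hrho=\hrho,\ wS\subset\hat{\Delta}_+\}=\{\id\}$ is exactly what the paper does, and your verification of this claim is correct: the shift formula $w(\varepsilon'_i)=\eta_i\varepsilon'_{\sigma(i)}+\tfrac{1-\eta_i}{2}\delta$ for $w\in H_0$ follows from $(\hrho,\varepsilon'_i)=\tfrac12$ and $(\hrho,\delta)=1$, and the positivity bookkeeping for $w\beta$ with $\beta\in S$ is right.

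The execution diverges from the paper in one respect worth recording. You first pin down $H_0=\Stab_{\hat{W}^{\#}}\hrho$ explicitly (as a copy of $W(C_n)$ with the translation shift) and only then impose $wS\subset\hat{\Delta}_+$; this is the step you correctly flag as the main obstacle. The paper sidesteps this description entirely: it writes a general $w\in H$ as $t_{\mu}y$ with $y\in W^{\#}$, $t_{\mu}\in T$, and plays the two constraints off against each other---$wS\subset\hat{\Delta}_+$ forces $(y^{-1}\mu,\varepsilon_i)\geq 0$ for every $i$, while $w\hrho=\hrho$ forces $(\hrho,y^{-1}\mu)+\tfrac{h^{\vee}}{2}(\mu,\mu)=0$, hence $(\hrho,y^{-1}\mu)\leq 0$; since $(\hrho,y^{-1}\mu)=\tfrac12\sum_i(y^{-1}\mu,\varepsilon_i)$, both inequalities are equalities and $\mu=0$. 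Then $w=y\in W^{\#}$ and the finite argument finishes. Your route is more concrete and makes the structure of $H_0$ visible; the paper's inequality trick is shorter and never needs to know what $H_0$ looks like. A minor point: your justification that $\delta-\varepsilon_i-\varepsilon'_{\sigma(i)}$ is positive via ``$\leq\theta$'' is a detour---any $\delta+\gamma$ with $\gamma\in\Delta$ lies in $\hat{\Delta}_+$.
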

\begin{proof}
Expanding the expression $w(\frac{e^{\hat{\rho}}}{\prod_{\beta\in S}
(1+e^{-\beta})})$, we see that
the coefficient of $e^{\hat{\rho}}$ in $Y$ is equal to the sum
$\sum_{w\in H}\sgn(w)$, where
$$H:=\{w\in\hat{W}^{\#}|\ w\hat{\rho}=\hat{\rho}\ \&\ 
wS\subset\hat{\Delta}_+\}.$$
Take $w\in H$ and write $w=t_{\mu}y$, where $y\in W^{\#}$ and 
$t_{\mu}\in T$ (see Sect.~\ref{intro} for notation) is
given by 
$$t_{\mu}(\lambda)=\lambda+(\lambda,\delta)\mu-
((\lambda,\mu)+\frac{(\mu,\mu)}{2}(\lambda,\delta))\delta\ \text{ for }
\lambda\in\fhh^*.$$
Retain notation of~\ref{rootsys}. One has $S=\{\delta_i-\vareps_i\}_{i=1}^n$
and $w(\delta_i-\vareps_i)=\delta_i-y\vareps_i+(y^{-1}\mu,\vareps_i)\delta$,
because $(\mu,\Delta_2)=0$, see~\ref{condii}
for notation. The condition $wS\subset\hat{\Delta}_+$
gives $(y^{-1}\mu,\vareps_i)\geq 0$ for $i=1,\ldots,n$. 
On the other hand,
$$w\hat{\rho}=y\hat{\rho}+h^{\vee}\mu-((\hat{\rho},y^{-1}\mu)+
\frac{(\mu,\mu)}{2}h^{\vee})\delta.$$
Since $\mu$ and $\hat{\rho}-y\hat{\rho}$ lie in the $\mathbb{Q}$-span 
of $\Delta^{\#}$,
the condition $w\hat{\rho}=\hat{\rho}$ gives 
$(\hat{\rho},y^{-1}\mu)+\frac{(\mu,\mu)}{2}h^{\vee}=0$.
One has $(\mu,\mu)\geq 0$ since $\mu$ lies in the 
$\mathbb{Q}$-span of $\Delta^{\#}$. Since $h^{\vee}>0$, we get
$$0\geq (\hat{\rho},y^{-1}\mu)=(\rho,y^{-1}\mu)=
(\frac{1}{2}\sum_{i=1}^n\vareps_i,y^{-1}\mu).$$
Using the above inequalities $(y^{-1}\mu,\vareps_i)\geq 0$,
we conclude that $0=(\hat{\rho},y^{-1}\mu)=\frac{(\mu,\mu)}{2}h^{\vee}$ 
that is $\mu=0$. Therefore
$w=y\in W^{\#}$. Now we can obtain the statement
using the argument of the proof of~\Lem{lem1} (iii), or, by
observing that $y\hat{\rho}=\hat{\rho}$ implies that 
$y$ permutes $\{\vareps_i\}_{i=1}^n$ and then $yS\subset\hat{\Delta}_+$
forces $y=\{\id\}$.
\end{proof}

\subsection{}
Assume that the denominator identity does not hold so
$\hat{R}e^{\hat{\rho}}-Y\not=0$. 

The coefficient of $e^{\hat{\rho}}$ in $\hat{R}e^{\hat{\rho}}$
is equal to $1$. From Lemmas~\ref{lem1}, \ref{lem2} we get
\begin{equation}\label{suppR}
\supp\bigl(\hat{R}e^{\hat{\rho}}-Y\bigr)\subset U\setminus\{\hat{\rho}\}.
\end{equation}

Let $\hat{\rho}^{\#}$ be the standard element for the root system
$\hat{\Delta}^{\#}=(\hat{\Delta}^{\#}\cap\hat{\Delta}_+)\coprod 
(\hat{\Delta}^{\#}\cap\hat{\Delta}_-)$. 
Set
$$X:=\hat{R}_1e^{\hat{\rho}^{\#}-\hat{\rho}}(\hat{R}e^{\hat{\rho}}-Y).$$
By the above assumption $X\not=0$.

By~\ref{infprod},  
$\hat{R}_0,\hat{R}_1e^{\hat{\rho}^{\#}-\hat{\rho}}\in \cR_{W}$, where
$W$ is the Weyl group of $\fhg$.
By~\Lem{Rrho}, $\hat{R}e^{\hat{\rho}}, \hat{R}_0e^{\hat{\rho}^{\#}}$ 
are $\hat{W}^{\#}$-skew-invariant elements of
$\cR'$. Therefore for each $w\in\hat{W}^{\#}$ 
one has 
$$\frac{\hat{R}_0 e^{\hat{\rho}^{\#}}}
{\hat{R}_1e^{\hat{\rho}^{\#}-\hat{\rho}}}=
\hat{R}e^{\hat{\rho}}=\sgn(w)w(\hat{R}e^{\hat{\rho}})=\sgn(w)
\frac{w(\hat{R}_0 e^{\hat{\rho}^{\#}})}
{w(\hat{R}_1e^{\hat{\rho}^{\#}-\hat{\rho}})}=
\frac{\hat{R}_0 e^{\hat{\rho}^{\#}}}
{w(\hat{R}_1e^{\hat{\rho}^{\#}-\hat{\rho}})}.$$
Thus $\hat{R}_1e^{\hat{\rho}^{\#}-\hat{\rho}}$ is a
$\hat{W}^{\#}$-invariant element of $\cR_{W}$. Therefore
$$\hat{R}_1e^{\hat{\rho}^{\#}-\hat{\rho}}Y=\sum_{w\in \hat{W}^{\#}} \sgn(w)
w\bigl(\hat{R}_1e^{\hat{\rho}^{\#}}
\prod_{\beta\in S} (1+e^{-\beta})^{-1}\bigr)$$
and so
$$X=\hat{R}_0e^{\hat{\rho^{\#}}}-
\sum_{w\in \hat{W}^{\#}} \sgn(w)wZ,\ \text{ where } 
Z:=e^{\hat{\rho}^{\#}}
\prod_{\beta\in\hat{\Delta}_{\ol{1}+}\setminus S} (1+e^{-\beta}).$$
Since $\hat{\rho}^{\#}-\hat{\rho}\in
\supp(\hat{R}_1e^{\hat{\rho}^{\#}-\hat{\rho}})\subset 
\hat{\rho}^{\#}-\hat{\rho}-\hat{Q}^+$, we obtain from~(\ref{suppR})
$$\max \supp(X)=\hat{\rho^{\#}}-\hat{\rho}+\max \supp 
\bigl(\hat{R}e^{\hat{\rho}}-Y\bigr)\subset 
\hat{\rho^{\#}}-\hat{\rho}+(U\setminus
\{\hat{\rho}\}),$$
that is
\begin{equation}\label{suppX}
\max \supp(X)\subset\{\mu\in\hat{\rho^{\#}}-(\hat{Q}^+\setminus\{0\})|\ 
(\mu-\hat{\rho^{\#}}+\hat{\rho},\mu-\hat{\rho^{\#}}+\hat{\rho})=
(\hat{\rho},\hat{\rho})\}.
\end{equation}

\subsubsection{}\label{regwt}
Recall~\cite{Jbook}, A.1.18, \cite{KT}, that for any $\lambda\in\fhh^*$
the stabilizer $\Stab_{\hat{W}^{\#}} \lambda$ is either trivial or 
contains a reflection $s_{\alpha}$.
Let us call $\lambda\in P$ {\em regular} if $\Stab_{\hat{W}^{\#}} 
\lambda=\{\id\}$.
Say that the orbit $\hat{W}^{\#}\lambda$ is regular if $\lambda$ is regular
(so the orbit consists of regular points).

By~\ref{infprod}, $Z\in \cR_W$. One has 
$\sum_{w\in \hat{W}^{\#}} \sgn(w)wZ=\hat{R}_1e^{\hat{\rho^{\#}}-\hat{\rho}}Y
\in\cR$, because $Y\in\cR$.
In the light of~\ref{compex}, $\sum_{w\in \hat{W}^{\#}} \sgn(w)wZ$ 
belongs to $\cR_{\hat{W}^{\#}}$ and is $\hat{W}^{\#}$-skew-invariant.
By~\Lem{Rrho},  $\hat{R}_{0}
e^{\hat{\rho}^{\#}}\in\cR_{W}$ is $\hat{W}^{\#}$-skew-invariant.
Hence $X$ belongs to $\cR_{\hat{W}^{\#}}$ and is $\hat{W}^{\#}$-skew-invariant.
Using~\ref{compex} we conlclude that
$\supp(X)$ is a union of regular $\hat{W}^{\#}$-orbits.

\subsubsection{}\label{chuki}
Take $\nu\in\max\supp X$. Then $\nu$ is a maximal element
in a regular $\hat{W}^{\#}$-orbit and, by~(\ref{suppX}),
$\nu\in\hat{\rho}^{\#}-(\hat{Q}^+\setminus\{0\})$.

One has $\hat{Q}^+\subset \mathbb{Q}\hat{\Delta}^{\#}+\mathbb{Q} M$, where 
$M=\Delta_2$ (see~\ref{condii} for notation)
if ${\Delta}$ is not of the type $A(m,n), C(n)$, 
and, for the types $A(m,n), C(n)$ one has
 $M=\Delta_2\cup\{\xi\}$, where $\xi\in\mathbb{Q}\Delta$ 
is such that $(\xi,\xi)<0$, 
$(\xi,\Delta_{\ol{0}})=0$. The element $\xi$ is given in~\ref{rootsys}.

Write $\nu=\hat{\rho^{\#}}+\nu_1+\nu_2$, 
where $\nu_1\in \mathbb{Q}\hat{\Delta}^{\#}$
and $\nu_2\in \mathbb{Q} M$.  Since $\hat{W}^{\#}\nu_2=\nu_2$,
the vector $\nu-\nu_2$ is also a maximal element
in a regular $\hat{W}^{\#}$-orbit. For each simple root $\alpha$
of $\hat{\Delta}^{\#}_+$ one has
$$\langle\nu-\nu_2,\alpha^{\vee}\rangle=\langle\nu,\alpha^{\vee}\rangle
\in\langle\hat{\rho}^{\#},\alpha^{\vee}\rangle-
\langle\hat{Q}^+,\alpha^{\vee}\rangle\subset\mathbb{Z},$$
since $\langle\hat{\rho}^{\#},\alpha^{\vee}\rangle=1$ and
$\langle\hat\Delta,\alpha^{\vee}\rangle\subset\mathbb{Z},$
by~\ref{pral}.
In the light of~\Lem{lem3}, $\nu-\nu_2=\hat{\rho}^{\#}+\nu_1
\in \hat{\rho}^{\#}-\mathbb{Q}\delta$
so $\nu_1=-s\delta$ for some $s\in\mathbb{Q}$.

Substituting $\nu_1=-s\delta$ and using~(\ref{suppX})  we get 
\begin{equation}\label{nu12}
(\hat{\rho}-s\delta+\nu_2,\hat{\rho}-s\delta+\nu_2)=(\hat{\rho},\hat{\rho}).
\end{equation}

By~\ref{condii}, 
$(\hat{\rho},-s\delta+\nu_2)\leq 0$, since 
$-s\delta+\nu_2\in -\hat{Q}^+$.
Therefore $(\nu_2,\nu_2)\geq 0$.
Recall that the form $(-,-)$ is negatively definite on $\Delta_2$ and
so is negatively definite on $M$.
Thus $(\nu_2,\nu_2)\geq 0$ gives $\nu_2=0$. Now
the formula~(\ref{nu12})
gives $s=0$ (because $(\hat{\rho},\delta)=h^{\vee}\not=0$).
Hence $\nu=\hat{\rho}^{\#}$, a contradiction.

\section{Appendix}
\subsection{}
The following lemma is used in~\ref{chuki}.
Let $\fg$ be an affine Lie algebra, let $\Pi$ be its
set of simple roots. Let $W$ be the Weyl group of $\fg$.
Define regular $W$-orbits as in~\ref{regwt}. 
\subsubsection{}
\begin{lem}{lem3}
If $\lambda\in \sum_{\alpha\in\Pi}\mathbb{Q}\alpha$ is such that
$\lambda+\rho$ is a maximal element in a regular $W$-orbit
and $\langle \lambda,\alpha^{\vee}\rangle\in\mathbb{Z}$ 
for any $\alpha\in\Pi$, then
$\lambda\in \mathbb{Q}\delta$, where $\delta$ is  
the minimal imaginary root. 
\end{lem}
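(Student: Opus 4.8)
The plan is to show that the two hypotheses force $\langle\lambda,\alpha^\vee\rangle=0$ for every $\alpha\in\Pi$, after which $\lambda\in\mathbb{Q}\delta$ drops out of the corank-one property of the affine Cartan matrix. Throughout I use that $\langle\rho,\alpha^\vee\rangle=1$ for every $\alpha\in\Pi$ (we are in an affine Lie algebra), and I write $Q^+:=\sum_{\alpha\in\Pi}\mathbb{Z}_{\geq 0}\alpha$, so that the order is $\mu\leq\nu\iff\nu-\mu\in Q^+$.

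First I would record that $\lambda$ has level zero. Writing $\delta=\sum_{\alpha\in\Pi}m_\alpha\alpha$ with all marks $m_\alpha\in\mathbb{Z}_{>0}$, and letting $K=\sum_{\alpha\in\Pi}m_\alpha^\vee\alpha^\vee$ be the canonical central element with all comarks $m_\alpha^\vee\in\mathbb{Z}_{>0}$, one has $\langle\beta,K\rangle=0$ for every $\beta\in\Pi$; since $\lambda\in\sum_{\alpha\in\Pi}\mathbb{Q}\alpha$ this gives $\langle\lambda,K\rangle=0$, that is,
\[
\sum_{\alpha\in\Pi}m_\alpha^\vee\,\langle\lambda,\alpha^\vee\rangle=0 .
\]

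Next I would extract dominance from the hypothesis that $\lambda+\rho$ is a maximal element of a regular orbit. For $\alpha\in\Pi$ the number $\langle\lambda+\rho,\alpha^\vee\rangle$ is an integer; if it were negative, then $s_\alpha(\lambda+\rho)-(\lambda+\rho)=-\langle\lambda+\rho,\alpha^\vee\rangle\,\alpha\in Q^+\setminus\{0\}$, so $s_\alpha(\lambda+\rho)>\lambda+\rho$, contradicting maximality; and if it were $0$ then $s_\alpha$ would fix $\lambda+\rho$, contradicting regularity of the orbit. Hence $\langle\lambda+\rho,\alpha^\vee\rangle\geq 1$, i.e. $\langle\lambda,\alpha^\vee\rangle\geq 0$, for every $\alpha\in\Pi$. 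Feeding this into the level-zero identity, in which every comark $m_\alpha^\vee$ is strictly positive, forces $\langle\lambda,\alpha^\vee\rangle=0$ for all $\alpha\in\Pi$. Finally, $\lambda$ then lies in the kernel of the linear map $\mu\mapsto(\langle\mu,\alpha^\vee\rangle)_{\alpha\in\Pi}$ on $\sum_{\alpha\in\Pi}\mathbb{Q}\alpha$, which is given by the affine Cartan matrix; this kernel is one-dimensional and spanned by $\delta$ (as $\langle\delta,\alpha^\vee\rangle=0$ for all $\alpha\in\Pi$), so $\lambda\in\mathbb{Q}\delta$.

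I do not expect a serious obstacle: the only point that needs care is the reading of ``maximal element in a regular $W$-orbit'' in the middle step --- maximality rules out negative values of $\langle\lambda+\rho,\alpha^\vee\rangle$ while regularity rules out the value $0$, and it is precisely this strict dominance, combined with the level-zero relation, that pins $\lambda$ down to a multiple of $\delta$. The remaining ingredients (the shapes of $\delta$ and $K$, the corank-one property) are standard facts about affine Lie algebras, and in the application to~\Lem{lem3} they may alternatively be verified directly from the explicit affine root system.
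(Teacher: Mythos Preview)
Your proof is correct and follows essentially the same route as the paper's. Both arguments use maximality and regularity of $\lambda+\rho$ to force $\langle\lambda,\alpha^\vee\rangle\geq 0$ for every $\alpha\in\Pi$, and then conclude that these nonnegative integers all vanish, whence $\lambda$ lies in the one-dimensional kernel of the affine Cartan matrix, i.e.\ in $\mathbb{Q}\delta$. The only cosmetic difference is in how the vanishing is obtained: the paper invokes \cite{Kbook}, Thm.~4.3 (for an affine Cartan matrix, $Ax\geq 0$ forces $Ax=0$), while you unwind this by pairing with the canonical central element $K=\sum m_\alpha^\vee\alpha^\vee$ to get the level-zero relation $\sum m_\alpha^\vee\langle\lambda,\alpha^\vee\rangle=0$ with all $m_\alpha^\vee>0$.
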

\begin{proof}
For each $\alpha\in\Pi$ set
$k_{\alpha}:=\langle \lambda,\alpha^{\vee}\rangle$.
Since $\lambda+\rho$ is a maximal element in a regular $W$-orbit
one has 
$s_{\alpha}(\lambda+\rho)=\lambda+\rho-k_{\alpha}\alpha\not\geq\lambda+\rho$,
so $k_{\alpha}\not\in\mathbb{Z}_{\leq 0}$. By the assumption,
$k_{\alpha}$ is an integer, so $k_{\alpha}>0$.

Write $\lambda=\sum_{\alpha\in\Pi} x_{\alpha}\alpha,\ 
x_{\alpha}\in\mathbb{Q}$. Since $\langle \lambda,\alpha^{\vee}\rangle>0$
for each $\alpha\in\Pi$, we have
$Ax\geq 0$, where $A$ is the Cartan matrix
of $\fg$ and $x=(x_{\alpha})_{\alpha\in\Pi}$.
From~\cite{Kbook}, Thm. 4.3 it follows that
$\sum_{\alpha\in\Pi} x_{\alpha}\alpha\in\mathbb{Q}\delta$ as required.
\end{proof}

\subsection{Basic Lie superalgebras}\label{rootsys}
The basic Lie superalgebras with a non-zero Killing form, which are not 
Lie algebras, are
$A(m,n), m\not=n; B(m,n); C(n); D(m,n), m\not=n+1; F_4; G_3$.
Below for each of these root systems we give an example of $\Pi$ satisfying 
the conditions (i), (ii) of~\ref{condii}.
In all cases $\Delta^{\#}$
lies in the lattice spanned by $\{\vareps_i\}_{i=1}^{\max(m,n)}$ and 
$\Delta_2$ 
lies in the lattice spanned by $\{\delta_i\}_{i=1}^{\min(m,n)}$.

Retain notation of~\ref{condii}. 
In all cases except $B(m,m)$ one has
$\theta\in\Delta^{\#}$ (the condition (iii) in~\ref{condii});
for $B(m,m)$ one has $(\theta,\theta)=0$.

In all cases except $A(m,n), C(n)$ one has 
$\mathbb{Q}\Delta=\mathbb{Q}\Delta_{\ol{0}}$; for
$A(m,n), C(n)$ one has 
$\mathbb{Q}\Delta=\mathbb{Q}\Delta_{\ol{0}}+\mathbb{Q}\xi$, where
$\xi$ is described below.

\subsubsection{Case $A(m,n), m\not=n$}
Since $A(m,n)\cong A(n,m)$, we may assume that $m>n$. 
The roots are $\Delta_{\ol{0}}=\{\vareps_i-\vareps_j\}
_{1\leq i\not=j\leq m}\cup\{\delta_i-\delta_j\}
_{1\leq i\not=j\leq n}$, 
$\Delta_{\ol{1}}=\{\pm(\vareps_i-\delta_j)\}_{1\leq i\leq n}
^{1\leq j\leq m}$. Set 
$$\Pi:=\{\vareps_1-\delta_1,\delta_1-\vareps_2,\vareps_2-\delta_2,\ldots,
\delta_n-\vareps_{n+1},\vareps_{n+1}-\vareps_{n+2},\ldots,
\vareps_{m-1}-\vareps_m\}$$
and $S:=\{\vareps_i-\delta_i\}$. One has 
$\theta=\vareps_1-\vareps_m\in\Delta^{\#}$.
We take $\xi=\frac{\sum_{i=1}^m\vareps_i}{m}-\frac{\sum_{j=1}^n\delta_j}{n}$.
One has $(\xi,\Delta_{\ol{0}})=0$.

\subsubsection{Case $B(m,n), m<n$}
The roots are $\Delta_{\ol{0}}=\{\pm\vareps_i\pm\vareps_j;\pm 2\vareps_i\}
_{1\leq i\not=j\leq n}\cup\{\pm\delta_i\pm\delta_j;\pm\delta_i\}
_{1\leq i\not=j\leq m}$, 
$\Delta_{\ol{1}}=\{\pm\vareps_i\pm\delta_j;\pm\vareps_i\}_{1\leq i\leq n}
^{1\leq j\leq m}$. We take
$$\Pi:=\{\vareps_1-\delta_1,\delta_1-\vareps_2,\vareps_2-\delta_2,\ldots,
\vareps_m-\delta_m,\delta_m-\vareps_{m+1},
\vareps_{m+1}-\vareps_{m+2},\ldots,
\vareps_{n-1}-\vareps_n,\vareps_n\}$$
and $S:=\{\vareps_i-\delta_i\}_{i=1}^m$. 
One has $\theta=2\vareps_1\in\Delta^{\#}$.

\subsubsection{Case $B(n,n)$}
The roots are as above. We take
$$\Pi:=\{\delta_1-\vareps_1,\vareps_1-\delta_2,\ldots,
\delta_n-\vareps_{n},\vareps_n\}$$
and $S:=\{\delta_i-\vareps_i\}$.  One has
$\theta=\delta_1+\vareps_1$.

\subsubsection{Case $B(m,n), m\geq n+1$}
The roots are $\Delta_{\ol{0}}=\{\pm\vareps_i\pm\vareps_j;\pm \vareps_i\}
_{1\leq i\not=j\leq m}\cup\{\pm\delta_i\pm\delta_j;\pm 2\delta_i\}
_{1\leq i\not=j\leq n}$, 
$\Delta_{\ol{1}}=\{\pm\vareps_i\pm\delta_j;\pm\vareps_i\}_{1\leq i\leq n}
^{1\leq j\leq m}$. We take for $m=n+2$ 
$$\Pi:=\{\vareps_1-\vareps_2,\vareps_2-\delta_1,\delta_1-\vareps_3,
\ldots,\delta_n-\vareps_{n+2},\vareps_{n+2}\},$$
and for $m>n+2$ 
$$\Pi:=\{\vareps_1-\vareps_2,\vareps_2-\delta_1,\delta_1-\vareps_3,
\ldots,\delta_n-\vareps_{n+2},\vareps_{n+2}-\vareps_{n+3},\ldots,
\vareps_{m-1}-\vareps_{m},\vareps_{m}\}.$$
Then $S:=\{\vareps_i-\delta_i\}_{i=1}^n$ lies in $\Pi$. 
One has $\theta=\vareps_1+\vareps_2\in \Delta^{\#}$.

\subsubsection{Case $C(m)$}
The roots are $\Delta_{\ol{0}}=\{\pm\vareps_i\pm\vareps_j;\pm 2\vareps_i\}
_{1\leq i\not=j\leq m}$, 
$\Delta_{\ol{1}}=\{\pm\vareps_i\pm\delta_1\}_{1\leq j\leq m}$
Set
$$\Pi:=\{\vareps_1-\vareps_2,\vareps_2-\vareps_3,\ldots,
\vareps_{m-1}-\vareps_m,\vareps_m-\delta_1,\vareps_m+\delta_1\}.$$
One has $\theta=2\vareps_1\in \Delta^{\#}$.
Observe that $\Delta_2=\emptyset$. We take $\xi:=\delta_1$.

\subsubsection{Case $D(m,n), n\geq m$}
The roots are $\Delta_{\ol{0}}=\{\pm\vareps_i\pm\vareps_j;\pm 2\vareps_i\}
_{1\leq i\not=j\leq n}\cup\{\pm\delta_i\pm\delta_j\}
_{1\leq i\not=j\leq m}$, 
$\Delta_{\ol{1}}=\{\pm\vareps_i\pm\delta_j\}_{1\leq i\leq n}
^{1\leq j\leq m}$. We take
$$\Pi:=\{\vareps_1-\delta_1,\delta_1-\vareps_2,\vareps_2-\delta_2,\ldots,
\delta_m-\vareps_m,\vareps_m-\vareps_{m+1},\ldots,
\vareps_{n-1}-\vareps_n,2\vareps_n\}$$
and $S:=\{\vareps_i-\delta_i\}_{i=1}^m$.
One has $\theta=2\vareps_1\in \Delta^{\#}$.

\subsubsection{Case $D(m,n), m>n+1$}
The roots are $\Delta_{\ol{0}}=\{\pm\vareps_i\pm\vareps_j\}
_{1\leq i\not=j\leq n}\cup\{\pm\delta_i\pm\delta_j;\pm 2\delta_i\}
_{1\leq i\not=j\leq m}$, 
$\Delta_{\ol{1}}=\{\pm\vareps_i\pm\delta_j\}_{1\leq i\leq n}
^{1\leq j\leq m}$. 
For $m=n+2$ set
$$\Pi:=\{\vareps_1-\vareps_2,\vareps_2-\delta_1,\delta_1-\vareps_3,
\ldots,\delta_n-\vareps_{n+2},\delta_n+\vareps_{n+2}\},$$
for $m>n+2$ set
$$\Pi:=\{\vareps_1-\vareps_2,\vareps_2-\delta_1,\delta_1-\vareps_3,
\ldots,\delta_n-\vareps_{n+2},\vareps_{n+2}-\vareps_{n+3},\ldots,
\vareps_{m-1}-\vareps_{m},\vareps_{m-1}+\vareps_{m}\}.$$
Then $\Pi$ contains $S:=\{\vareps_i-\delta_i\}_{i=1}^n$. 
One has $\theta=\vareps_1+\vareps_2\in \Delta^{\#}$.

\subsubsection{Case $F(4)$}
We choose 
$$\Pi:=\{(\vareps_1+\vareps_2+\vareps_3+\delta_1)/2;
(-\vareps_1+\vareps_2+\vareps_3-\delta_1)/2;
(-\vareps_1-\vareps_2-\vareps_3+\delta_1)/2;
\vareps_1-\vareps_2\}.$$
In this case $S$ can be any odd simple root;
one has 
$\theta=\vareps_3-\vareps_2\in \Delta^{\#}$.

\subsubsection{Case $G(3)$}
For $G(3)$ the roots are expressed in terms of linear functions
$\vareps_1,\vareps_2,\vareps_3$, corresponding to $G_2$,
$\vareps_1+\vareps_2+\vareps_3=0$, and $\delta_1$, corresponding to $A_1$;
we choose 
$\Pi=\{\delta_1-\vareps_2,\vareps_3-\delta_1, -\vareps_3-\vareps_1\}$.
In this case $S$ can be any odd simple root;
one has  $\theta=\vareps_3-\vareps_1\in \Delta^{\#}$.


\end{document}